\title{Descriptive Complexity of Sensitivity of Cellular Automata}
\author{Tom Favereau \inst{1}(\Envelope) \orcidID{0009-0008-5876-8676} \and Ville Salo \inst{2} \orcidID{0000-0002-2059-194X}}
\institute{
  Mines Nancy, University of Lorraine, France \\
  \email{tom.favereau@etu.mines-nancy.univ-lorraine.fr} \and
  University of Turku, Finland \\
  \email{vosalo@utu.fi}
}
\begin{document}
\maketitle

\begin{abstract}
We study the computational complexity of determining whether a cellular automaton is sensitive to initial conditions. We show that this problem is $\Pi^0_2$-complete in dimension 1 and $\Sigma^0_3$-complete in dimension 2 and higher. This solves a question posed by Sablik and Theyssier. 
\end{abstract}
\keywords{Cellular automata, Sensitivity, Arithmetical hierarchy}

\section{Introduction}

Cellular automata are discrete dynamical systems that exhibit complex behavior despite their simple local rules. There have been numerous attempts to classify such systems, with the first notable classification proposed by Wolfram \cite{wolfram1983} for one-dimensional cellular automata. However, this classification lacked rigorous formality, which led K{\r{u}}rka~\cite{kurka1997} to propose a more formal classification of one-dimensional cellular automata based on their sensitivity to initial conditions.

Other classification schemes have also been proposed, such as Culik's classification. The arithmetical complexity of Culik's classes has been established and demonstrated by Sutner in~\cite{sutner1989}. The decidability of K{\r{u}}rka's classes, particularly the problem of sensitivity to initial conditions, was studied by Durand et al.~\cite{Durand2003}, while the reversible case was addressed by Lukkarila~\cite{lukkarila2010}.

Sablik and Theyssier~\cite{sablik2011topological} showed that K{\r{u}}rka's classification no longer holds in higher dimensions and that additional classes must be introduced. In the same article, they investigated the arithmetical complexity of K{\r{u}}rka's classes. They demonstrated that sensitivity to initial conditions is $\Pi^0_2$ in one dimension, but did not prove completeness, and showed $\Sigma^0_3$-hardness starting from dimension three, but the exact complexity of sensitivity within the arithmetical hierarchy was left open.

Understanding the dynamical properties of cellular automata, particularly their sensitivity to initial conditions, is crucial for characterizing their behavior. In this paper, we address these open questions and provide a precise characterization of the computational complexity of determining sensitivity across different dimensions. Specifically, we demonstrate that sensitivity to initial conditions is $\Pi^0_2$-complete in one dimension and $\Sigma^0_3$-complete in two dimensions and higher. As a corollary, we provide a new proof that the finite nilpotency problem for cellular automata, originally proven by Sutner in \cite{sutner1989}, is $\Pi^0_2$-complete. Additionally, we explore an application of our results by constructing a cellular automaton whose sensitivity is equivalent to the truth of the twin prime conjecture. These findings not only fill gaps left by previous research but also expose the fundamental difference in complexity between one-dimensional and higher-dimensional cellular automata with respect to sensitivity.

Our main results establish a complete classification of the complexity of the sensitivity problem in cellular automata. We formally state them as follows.

For one-dimensional cellular automata, establish:

\begin{theorem}\label{thm:dim1}
The problem of determining whether a one-dimensional cellular automaton is sensitive is $\Pi^0_2$-complete.
\end{theorem}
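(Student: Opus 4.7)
My plan is to establish $\Pi^0_2$-completeness in two separate parts. For the upper bound, I would follow Sablik and Theyssier~\cite{sablik2011topological}, who rely on K{\r{u}}rka's theorem~\cite{kurka1997} that a one-dimensional cellular automaton $F$ is sensitive if and only if it admits no \emph{blocking word}: a word $w$ of length $\ell$ is $(r,p)$-blocking for $F$ when $F^n(x)|_{[p,p+r)} = F^n(y)|_{[p,p+r)}$ for every $n \ge 0$ and every pair $x, y$ of configurations both containing $w$ at positions $[0, \ell)$. For fixed $w, r, p$, this is a $\Pi^0_1$ condition, since at each time $n$ one need only check equality on a finite window whose dependence on $x, y$ is bounded by the radius of $F^n$. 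Hence ``$F$ has a blocking word'' is $\Sigma^0_2$ and sensitivity is $\Pi^0_2$.

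For the $\Pi^0_2$-hardness, I would produce a computable map $M \mapsto F_M$ from Turing machines to one-dimensional cellular automata such that $F_M$ is sensitive if and only if $M$ is total; since the problem $\mathrm{TOT} = \{M : M \text{ halts on every input}\}$ is $\Pi^0_2$-complete, this yields the required hardness. The CA $F_M$ would use a multi-track alphabet. Certain finite patterns $s_n$, the \emph{seeds}, encode a natural number $n$ flanked by ``sealed-boundary'' symbols. Around a seed, $F_M$ simulates $M$ on input $n$ inside a protected box whose boundary grows outwards to host the computation but prevents any information from crossing the seal. If and when $M$ halts on $n$, the boundary emits ``release'' signals that propagate in both directions at maximal speed, carrying information across the formerly sealed region.

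The intended property is then that $s_n$ is a blocking word for $F_M$ precisely when $M$ fails to halt on $n$: while the simulation runs forever the seal is eternal and the image window over the seed depends only on $s_n$, whereas upon halting the release signals will eventually perturb any fixed window. To make the correspondence exact, I would additionally arrange that on configurations lacking a valid seed $F_M$ acts in a demonstrably sensitive fashion, for instance by applying a nontrivial shift on an auxiliary track that also erases ``partial seed'' markers, so that the only possible source of blocking is a genuine non-halting seed. Combining these, $F_M$ has a blocking word iff some $s_n$ is blocking iff $M$ fails to halt on some $n$, i.e., iff $M \notin \mathrm{TOT}$.

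The principal obstacle will be the CA construction itself. One must engineer the seal so that it is truly eternal and unbreakable by any adversarial surrounding configuration when the computation does not halt (so that $s_n$ really is blocking in K{\r{u}}rka's strong sense, for \emph{all} extensions); ensure that the release is unavoidable and eventually reaches every window when the computation halts; and prevent any blocking from emerging off-seed. Related ingredients appear in constructions of Durand et al.~\cite{Durand2003} and Sablik--Theyssier~\cite{sablik2011topological}; the delicate adaptation here is to bring the set of blocking words of $F_M$ into exact correspondence with the non-halting inputs of $M$.
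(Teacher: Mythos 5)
Your proposal follows essentially the same route as the paper: the upper bound via K{\r{u}}rka's blocking-word characterization (existence of a blocking word is $\Sigma^0_2$, hence sensitivity is $\Pi^0_2$), and hardness by reducing $\mathrm{TOT}$ to sensitivity through a cellular automaton that simulates $M_e$ on each input inside sealed computation zones which dissolve upon halting and let an information-carrying particle pass. The construction difficulty you flag is exactly where the paper's work lies — it shields the seal from adversarial neighbors by taking \emph{two} adjacent copies of a computational block on a non-halting input (the outer never-halting block confines the inner one), and it neutralizes degenerate off-seed configurations by restarting the simulated machine whenever a zone extends — but your overall architecture and logical equivalences coincide with the paper's proof.
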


For higher dimension, we establish:

\begin{theorem}\label{thm:dimd}
For $d > 1$, the problem of determining whether a $d$-dimensional cellular automaton is sensitive is $\Sigma^0_3$-complete.
\end{theorem}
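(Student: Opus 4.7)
The plan is to establish matching $\Sigma^0_3$ upper and lower bounds for dimension $d \ge 2$.

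For the upper bound I first rephrase sensitivity in terms of finite patterns. Let $\rho$ be the radius of the local rule of $F$. By a standard compactness argument, the condition ``$\forall x \in A^{\mathbb{Z}^d}\, \exists y, t$ with $x =_{B_n} y$ and $F^t(x)|_{B_r} \neq F^t(y)|_{B_r}$'' is equivalent to the finite version ``$\forall w \in A^{B_n}\, \exists t, u, v \in A^{B_m}$ with $u,v$ extending $w$ on $B_n$ and $F^t(u)|_{B_r} \neq F^t(v)|_{B_r}$'' (with $m \ge r + t\rho$, so that $F^t$ is determined on $B_r$). The nontrivial implication follows because for any full extension $x$ of $w$, at least one of $u, v$ must, after filling in arbitrary symbols outside $B_m$, yield a configuration $y$ with $F^t(y)|_{B_r} \neq F^t(x)|_{B_r}$. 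Sensitivity then has the shape $\exists r \, \forall (n,w) \, \exists (t,u,v)\, \varphi$ with $\varphi$ decidable, so it lies in $\Sigma^0_3$. This argument is dimension-independent.

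For the lower bound I would reduce the canonical $\Sigma^0_3$-complete problem $\mathrm{COF} = \{e : W_e \text{ is cofinite}\}$ to sensitivity. Given a Turing machine $M_e$, I would construct a 2D cellular automaton $F_e$ such that $F_e$ is sensitive iff $M_e$ halts on cofinitely many inputs. The CA allocates, along a horizontal axis, sparsely-spaced vertical ``strips''; the strip at column $n$ simulates $M_e$ on input $n$. Perturbation signals travel horizontally toward the origin but are absorbed by any strip whose simulation has not yet halted, and propagate freely once it has. Sensitivity with radius $r$ then says that every neighborhood of the origin admits arbitrarily distant perturbations that reach $B_r$, which amounts to requiring all strips at columns $\ge r$ to eventually become transparent---i.e.\ $M_e(m)\!\downarrow$ for all $m \ge r$. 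Quantifying over $r$ on the outside yields sensitivity iff $W_e$ is cofinite.

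The main obstacle is the 2D geometry. In dimension $\ge 3$, as in Sablik and Theyssier's construction, one can dedicate an entire 2D plane to each simulation, so that tapes and the signal lane never compete for space; in dimension $2$ they must share the plane, forcing sparse placement of strips (e.g.\ column $n$ at position $f(n)$ with $f$ growing fast enough to accommodate the $n$-th tape) and carefully engineered signal routing that avoids active computation regions. Moreover, sensitivity must hold for \emph{every} initial configuration, so $F_e$ must include robust self-correcting dynamics---a monotone cleanup rule, a Gács-style fault-tolerant background, or a periodic resynchronization---that guarantee each strip eventually reboots into a genuine simulation of $M_e$ on its designated input, regardless of adversarial initial contents; otherwise an adversary could plant fake halt signatures or fake blockers and decouple sensitivity from $M_e$'s behavior. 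Reconciling this robustness with the tight 2D geometry is the most delicate step of the proof.
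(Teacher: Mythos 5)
Your upper bound is sound and amounts to the same argument as the paper's: the paper phrases it as ``non-sensitivity iff arbitrarily large blocking words exist,'' with existence of an $m$-blocking word being $\Sigma^0_2$, giving $\Pi^0_3$ for non-sensitivity; your direct $\exists r\,\forall(n,w)\,\exists(t,u,v)$ normal form is the same computation.

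The lower bound, however, has a genuine gap in the $e \notin \mathrm{COF}$ direction. To show non-sensitivity you must exhibit arbitrarily large blocking words, i.e.\ \emph{finite} patterns that protect an $m \times m$ region from all outside influence. A vertical strip that absorbs horizontally-travelling signals does not do this in dimension $2$: a finite strip can be routed around (above or below its ends), and an infinite strip is not a finite pattern and cannot be forced to appear in an arbitrary configuration anyway. This is exactly why the paper's construction uses \emph{closed} rectangular red loops whose perimeter encodes the input $y$: the interior of a loop is shielded from every direction, so a loop on which $M_e(y)$ never halts is a genuine blocking word, and infinitely many non-halting inputs yield arbitrarily large ones. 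Relatedly, the two issues you defer as ``the most delicate step''---robustness against adversarial initial configurations and the packing of tapes, signals, and workspace in the plane---are not side conditions but the actual content of the proof: the paper spends most of its effort showing that arbitrary red/green debris stabilizes into rectangular loops and tentacles (Lemma~\ref{lem:redzone}), that computation is driven externally by $p$/$q$ particles (so heads can be re-initialized and fake halt signatures cannot be planted in a way that survives), and that space overflow is handled by tentacles that destroy each other on collision. Without committing to mechanisms of this kind, the reduction does not go through; and for $d>2$ you would still need the slicing argument (or a fresh construction) to transport hardness upward, which you only gesture at.
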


The proofs of these theorems rely on embedding Turing machines into cellular automata and reducing the problems TOT and COF to the respective sensitivity problem. With these new results, the classification of cellular automata with respect to sensitivity can be summarized in Table \ref{tab:classification}. Formal definitions and preliminaries are provided in Section \ref{sec:prel}. We present the proof of Theorem \ref{thm:dim1} in Section \ref{sec:proof1}, and the proof of Theorem \ref{thm:dimd} in Section \ref{sec:proof2}. An application is given in Section \ref{sec:twin}.

\begin{table}[h] 
\centering 
\caption{Descriptive complexity of K{\r{u}}rka's classification of cellular automata. The set $\mathcal{E}$ denotes the equicontinuity points. The notation $\Pi^0_2$-c indicates $\Pi^0_2$-completeness, while $\Sigma^0_1$-? signifies that completeness is unknown. A question mark (?) indicates that both completeness and the position in the hierarchy are unknown. Our contributions are highlighted with an asterisk.}
\resizebox{\textwidth}{!}{  
\begin{tabular}{|c|c|c|c|c|c|} 
\hline
 & $\mathcal E = A^{\mathbb Z^d}$ & $\mathcal E \neq \emptyset$ & $\mathcal E = \emptyset$ & Sensitive & Expansive \\
\hline
$d=1$       & $\Sigma^0_1$-c          &  $\Sigma^0_2$-c$^{*}$ & $\Pi^0_2$-c$^{*}$  & $\Pi^0_2$-c$^{*}$ & $\Sigma^0_1$-?   \\
\hline
$d>1$   &  $\Sigma^0_1$-c & ? & ?  &     $\Sigma^0_3$-c$^{*}$ & $\Sigma^0_1$-?      \\
\hline
\end{tabular}
}

\label{tab:classification} 
\end{table}

\section{Preliminaries}\label{sec:prel}

We will first recall some basic definitions that we will need throughout this document.

\begin{definition}[Cellular Automaton]
A \textbf{cellular automaton} is a quadruple $(d, S, N, f)$ where $d$ is the dimension of the automaton, $S$ is a finite set of states, $N$ is a finite subset of $\mathbb{Z}^d$ called the neighborhood, and $f: S^N \to S$ is the local transition function. This induces a global function $F: S^{\mathbb{Z}^d} \to S^{\mathbb{Z}^d}$.
\end{definition}

The sensitivity to initial conditions is a property of dynamical systems that states that for any configuration, we can always find an arbitrarily close configuration that tends to diverge from our original configuration. 
To formalize this we endow $S^{\mathbb{Z}^d}$ with the following metric:

\begin{equation}
d(x, y) = 2^{-\min\{\|n\|_\infty \mid x(n) \neq y(n)\}}
\end{equation}
where $ \|n\|_\infty = \max_{1 \le i \le d} |n_i| $ denotes the supremum norm on \( \mathbb{Z}^d \).
 
We formalize the concept of sensitivity in the following definition:

\begin{definition}[Sensitivity]
A cellular automaton $F$ is \textbf{sensitive} to initial conditions if there exists $\epsilon > 0$ such that for all configurations $x$ and $\delta > 0$, there exists $y \in B_\delta(x)$ and $n \in \mathbb{N}$ such that $d(F^n(x), F^n(y)) > \epsilon$.
\end{definition}

In one dimension, a famous result states a connection between sensitivity and having words that block information, i.e., blocking words. Blocking words are characterized by a length that no information can cross. Indeed, to truly prevent information from propagating, such words must have a length larger than the automaton's radius. 

Given a set $K \subset \mathbb{Z}^d$ and a word $w \in S^K$, we write $w \sqsubset_K u$ if $u \in S^{\mathbb{Z}^d}$ contains $w$ at position $K$, and we define the \emph{cylinder}: 
\begin{equation}
\text{Cyl}(w, K) = \{ u \in S^{\mathbb{Z}^d} \mid w \sqsubset_K u \}
\end{equation}

We provide a definition of an $m$-blocking word:

\begin{definition}[m-blocking word]
A word $w \in S^*$ together with an integer $p$ is \textbf{m-blocking} if for any configurations $u, v \in \text{Cyl}(w, [0, |w|-1])$, for all $n \in \mathbb{N}, F^n(u)_{|[p,p+m]} = F^n(v)_{|[p,p+m]}$.
\end{definition}
Hence, we can state the well-known characterization of sensitivity in one dimension:

\begin{theorem}[K{\r{u}}rka \cite{kurka2008}]
A one-dimensional cellular automaton with radius $r$ is not sensitive if and only if it has an $r$-blocking word.\end{theorem}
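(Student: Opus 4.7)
The plan is to prove both directions, using the contrapositive for the reverse. For $(\Leftarrow)$, suppose $(w, p)$ is $r$-blocking. I will in fact exhibit an equicontinuity point, which is strictly stronger than non-sensitivity. Construct a configuration $x$ containing $w$ at positions tending to both $\pm\infty$. Given any $k$, pick occurrences $q_L \ll -k$ and $q_R \gg k$ of $w$ in $x$ whose blocked windows $[q_L+p, q_L+p+r]$ and $[q_R+p, q_R+p+r]$ lie entirely outside $[-k, k]$, and choose $\delta$ small enough that every $y \in B_\delta(x)$ agrees with $x$ on $[q_L, q_R+|w|-1]$; in particular $y$ carries copies of $w$ at $q_L$ and $q_R$. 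Splice by defining $z$ to agree with $x$ on $(-\infty, q_R+|w|-1]$ and with $y$ on $[q_R+|w|, \infty)$, so that $x, z$ differ only to the right of the right-hand copy of $w$ while $z, y$ differ only to the left of the left-hand copy. A ``firewall'' argument (below) yields $F^n(x)|_{[-k, k]} = F^n(z)|_{[-k, k]} = F^n(y)|_{[-k, k]}$ for every $n$, so $x$ is an equicontinuity point.

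The firewall claim is: if $u, v$ both contain $w$ at $[q, q+|w|-1]$ and agree on $[q+|w|, \infty)$, then $F^n(u)$ and $F^n(v)$ agree on $(-\infty, q+p+r]$ for every $n$ (with a symmetric statement on the right half-line). This is proved by induction on $n$. The base case is immediate (assuming $[p, p+r] \subseteq [0, |w|-1]$). For the inductive step, agreement on $(-\infty, q+p+r]$ at time $n$ combined with the radius-$r$ local rule forces agreement on $(-\infty, q+p]$ at time $n+1$, while the blocking property re-supplies the window $[q+p, q+p+r]$; their union is again $(-\infty, q+p+r]$.

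For $(\Rightarrow)$, I prove the contrapositive with $\epsilon = 2^{-(r+1)}$. Given $x$ and $\delta$, pick $k \ge r$ with $2^{-k} < \delta$ and let $w = x|_{[-k, k]}$. Since $(w, k)$ fails to be $r$-blocking, there exist $u', v' \in \text{Cyl}(w, [0, |w|-1])$ and $n$ with $F^n(u'), F^n(v')$ differing at some position in $[k, k+r]$. Translating by $-k$ yields $u, v \in B_\delta(x)$ (they agree with $x$ on $[-k, k]$) and $F^n(u), F^n(v)$ differing at a position $i \in [0, r]$; whichever of $u, v$ disagrees with $F^n(x)$ at $i$ is the required witness, since $|i| \le r$ forces $d(F^n(x), F^n(y)) \ge 2^{-r} > \epsilon$.

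The main obstacle is the firewall lemma: although the blocking definition only asserts agreement on a width-$r$ window at every time step, one must exploit the interplay between this periodic ``refill'' and the radius-$r$ local rule to conclude that agreement propagates over an entire half-line. This coupling is precisely why the parameter $m$ in the blocking definition must match the radius $r$; any strictly smaller width would be insufficient to close the induction.
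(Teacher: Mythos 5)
The paper states this theorem as a known result and cites K{\r{u}}rka's book without giving its own proof, so there is nothing internal to compare against; your argument is the standard one (the closest analogue in the paper, Lemma~\ref{lemma:blocking-words}, is only sketched there). Your overall structure is sound: the contrapositive direction with $\epsilon = 2^{-(r+1)}$ is correct, and the splicing-plus-firewall argument for the converse is the right idea and does produce an equicontinuity point. However, the firewall lemma as you have literally written it is false: you hypothesize that $u,v$ agree on $[q+|w|,\infty)$ (hence differ only to the \emph{left} of the word) but conclude agreement on the left half-line $(-\infty, q+p+r]$, which already fails at $n=0$. The hypothesis and conclusion belong to opposite versions of the lemma. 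What your induction actually proves --- and what your application to the pair $(x,z)$ needs --- is: if $u,v$ both contain $w$ at $[q,q+|w|-1]$ and agree on $(-\infty, q+|w|-1]$ (so differ only to the right of the word), then $F^n(u)$ and $F^n(v)$ agree on $(-\infty,q+p+r]$ for all $n$; the symmetric version (agreement on $[q,\infty)$, differences only on the left, conclusion: agreement on $[q+p,\infty)$) is what the pair $(z,y)$ requires. With the hypothesis corrected, the base case, the inductive step, and the application all go through, and your parenthetical assumption $[p,p+r]\subseteq[0,|w|-1]$ is in fact automatic, being forced by the $n=0$ instance of the blocking definition whenever $|S|\ge 2$. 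In short: right approach, correct in substance, but fix the half-line in the statement of the firewall claim.
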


We will also need a notion of blocking word in higher dimensions. In dimension $d$, we want an $m$-blocking word to be any pattern that contains a subpattern of size $m^d$ that no information can enter.

We define the \emph{shift action} by:
\begin{align}
\sigma^p(x)_u &= x_{u+p}, \quad \forall x \in S^{\mathbb Z^d}, \ \forall u, p \in \mathbb Z^d \\
\sigma^p(K) &= \{u+p \mid u \in K\}, \quad \forall K \subset \mathbb Z^d, \ \forall p \in \mathbb Z^d
\end{align}

\begin{definition}[Higher dimensional m-blocking word]
For any finite subset $K \subset \mathbb{Z}^d$ together with a vector $p\in \mathbb {Z}^d$ such that $\sigma^p ([0, m]^d) \subseteq K$, a word $w \in S^K$ is \textbf{m-blocking} if for any configurations $u,v$ containing $w$ at the position $K$, for all $n \in \mathbb{N}$, $F^n(u)_{|\sigma^p([0, m]^d)} = F^n(v)_{|\sigma^p([0, m]^d)}$.
\end{definition}

With that definition, we state the following lemma that shows how one can extend the blocking word characterization for higher-dimensional cellular automata.

\begin{lemma}\label{lemma:blocking-words}
A cellular automaton is not sensitive if and only if it has arbitrarily large blocking words.
\end{lemma}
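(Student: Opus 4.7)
The plan is to generalize K{\r{u}}rka's one-dimensional argument, proving each direction separately and using shift-invariance of $F$ to translate blocking patterns freely in $\mathbb{Z}^d$. Throughout, I would use the standard fact that $d(x,y) \leq 2^{-(m+1)}$ is equivalent to $x$ and $y$ agreeing on $\{n \in \mathbb{Z}^d : \|n\|_\infty \leq m\}$.

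For the direction from non-sensitivity to arbitrarily large blocking words, I would fix $m$ and apply the negation of sensitivity at scale $\epsilon = 2^{-(m+1)}$. This yields a configuration $x$ together with a radius $N$, which we may freely enlarge so that $N \geq m$, such that every $y$ agreeing with $x$ on $K = \{n : \|n\|_\infty \leq N\}$ has an orbit agreeing with that of $x$ on $\{n : \|n\|_\infty \leq m\}$, hence in particular on $[0,m]^d$, at all times. Setting $w = x|_K$ and $p = 0$ then gives $\sigma^0([0,m]^d) \subseteq K$, and any two configurations $u, v$ containing $w$ at $K$ are both close enough to $x$ for the above agreement to apply, yielding $F^n(u)|_{[0,m]^d} = F^n(v)|_{[0,m]^d}$ for all $n$. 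Hence $w$ is $m$-blocking, and letting $m \to \infty$ produces arbitrarily large blocking words.

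For the converse direction, given $\epsilon > 0$ I would choose $m$ so large that $2^{-\lfloor m/2 \rfloor} < \epsilon$ and pick an $m$-blocking $w \in S^K$ at offset $p$. Using shift-invariance of $F$, I would translate the pair $(K, p)$ so that the protected cube $\sigma^p([0,m]^d)$ contains the supremum-norm ball of radius $\lfloor m/2 \rfloor$ about the origin; let $R$ be a radius bounding $K$, fix any configuration $x$ extending $w$, and set $\delta = 2^{-R}$. Every $y \in B_\delta(x)$ also contains $w$ at $K$, so the blocking property forces $F^n(x)$ and $F^n(y)$ to agree on the entire protected cube for every $n$, yielding $d(F^n(x), F^n(y)) \leq 2^{-\lfloor m/2 \rfloor} < \epsilon$. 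As this holds for every $\epsilon$, the automaton is not sensitive.

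There is no deep obstacle here: the whole argument is the natural higher-dimensional rephrasing of K{\r{u}}rka's theorem. The only care required is the translation of $(K, p)$ via shift-invariance to align the protected cube with a neighborhood of the origin, together with the routine bookkeeping between the $\|\cdot\|_\infty$ metric and the inclusions of cubes.
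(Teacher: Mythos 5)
Your proof is correct and follows essentially the same route as the paper's: unfold the definition of (non-)sensitivity, translate between metric balls $B_{2^{-k}}(x)$ and agreement on sup-norm cubes, and read off the blocking-word property. Your version is in fact more careful than the paper's sketch, notably in using shift-invariance to align the protected cube $\sigma^p([0,m]^d)$ with a neighborhood of the origin, a point the paper's proof silently elides.
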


\begin{proof}
If $F$ is a sensitive cellular automaton, there exists $\epsilon > 0$ such that for all $x$ and $\delta > 0$, there exists $y \in B_{\delta}(x)$ and $n$ such that $d(F^n(x), F^n(y)) > \epsilon$. This means that there exists $m > -\log_2(\epsilon)$ such that arbitrarily close configurations will eventually differ on $[0, m]^d$, hence there is no $m$-blocking word.

Conversely, if $F$ is not sensitive, for every $\epsilon > 0$ and $m > -\log_2(\epsilon)$, we can find a pattern $\pi$ such that every configuration that matches with $\pi$ at position $0$ will never differ on $[0, m]^d$. Hence, we have arbitrarily large blocking words. \qed
\end{proof}

Turing machines are abstract computational models that provide a formal definition of a computable function. These machines can perform basic operations such as reading, writing, and moving the head. Their power lies in their ability to simulate any algorithm or computational process. We will use them to prove our complexity results.

A Turing machine is formally defined as a 5-tuple $(Q, A, q_i, q_h, \delta)$ where $Q$ is a finite set of states, including an initial state $q_i$ and a halting state $q_h$; $A$ is a finite alphabet that contains at least the blank symbol $\sqcup$ and the symbol $1$; and $\delta: Q \times A \to Q \times A \times \{-1, 0, 1\}$ is the transition function, such that for all $a \in A$, $\delta(q_h, a) = (q_h, a, 0)$. Inputs are natural numbers $n$ represented in unary as bi-infinite tapes containing only blanks and a block of $n$ consecutive $1$s, i.e., the tape content is $\sqcup^\infty 1^n \sqcup^\infty$, with the head initially positioned on the first $1$.

The transition function $\delta$ determines the machine's behavior: given a current state and a symbol under the head, it specifies the next state, the symbol to write, and the direction to move the head (-1 for left, 0 for stay, 1 for right).

Fix an enumeration of Turing machines; we denote by $M_e$ the $e$-th Turing machine. Let  $W_e := \{x \mid M_e \text{ halts on } x\}$ be the set of inputs on which $M_e$ halts. We now introduce the two sets that we will use in our proof:
\begin{equation}
    \text{TOT} := \{e \mid W_e = \mathbb{N}\}
\end{equation}
and 
\begin{equation}
    \text{COF} := \{e \mid W_e \text{ is cofinite}\}
\end{equation}
\begin{lemma}
    TOT is $\Pi^0_2$-complete and COF is $\Sigma^0_3$-complete. 
\end{lemma}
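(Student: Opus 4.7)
Both halves are classical results in computability theory (see, e.g., Soare's \emph{Recursively Enumerable Sets and Degrees} or Rogers' \emph{Theory of Recursive Functions and Effective Computability}); I will sketch each in turn.

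\emph{Upper bounds.} Writing $H(e,x,t)$ for the decidable predicate ``$M_e$ halts on input $x$ within $t$ steps'', I observe that $e \in \mathrm{TOT} \iff \forall x\, \exists t\, H(e,x,t)$, a $\Pi^0_2$ formula, and that $e \in \mathrm{COF} \iff \exists N\, \forall x\, (x < N \lor \exists t\, H(e,x,t))$, a $\Sigma^0_3$ formula.

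\emph{Hardness of TOT.} I will reduce from an arbitrary $\Pi^0_2$ set $\{e : \forall x\, \exists y\, R(e,x,y)\}$ with $R$ decidable. Via the s-m-n theorem, define a computable $f$ so that $M_{f(e)}$ on input $x$ dovetail-searches for $y$ with $R(e,x,y)$ and halts once one is found. Then $f(e) \in \mathrm{TOT}$ iff the original $\Pi^0_2$ condition holds, giving the many-one reduction.

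\emph{Hardness of COF.} Given a $\Sigma^0_3$ set $A = \{e : \exists y\, B(e,y)\}$ with $B \in \Pi^0_2$, the plan is to invoke the just-proved $\Pi^0_2$-completeness of TOT to obtain a computable $g$ with $B(e,y) \iff g(e,y) \in \mathrm{TOT}$, and then construct via s-m-n a computable $f$ such that $M_{f(e)}$ on input $n$ dovetails simulations of the $M_{g(e,y)}$'s in a schedule designed so that $W_{f(e)}$ is cofinite precisely when some $y$ makes $M_{g(e,y)}$ total. The main obstacle is that ``cofinite'' naturally corresponds to a $\forall^\infty$ condition rather than an $\exists$ condition; my plan to bridge this is to first replace $B(e,y)$ with the monotone form $\exists y' \leq y\, B(e,y')$ (still $\Pi^0_2$), and then to encode the input $n$ of $M_{f(e)}$ via a triangular bijection $n \leftrightarrow (y,k)$ with $0 \leq k \leq y$, with $M_{f(e)}$ halting iff some $y' \leq y$ makes $M_{g(e,y')}$ halt on all of $0,1,\ldots,k$. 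The minimal witness $y_0$ then forces every pair with $y \geq y_0$ into $W_{f(e)}$, leaving only the $\binom{y_0+1}{2}$ triangular pairs with $y < y_0$ possibly missing, so $W_{f(e)}$ is cofinite. The hard part will be the converse direction: ensuring $W_{f(e)}$ misses infinitely many inputs when $e \notin A$, which requires a careful padding of $g$ so that any failure of totality is witnessed by a diverging input suitably controlled in size. This last bookkeeping is standard recursion theory and I defer to the classical references.
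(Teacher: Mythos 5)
The paper does not actually prove this lemma---it just cites Rogers---so you are attempting more than the authors do. Your upper bounds are correct, and your $\Pi^0_2$-hardness argument for $\mathrm{TOT}$ (dovetail for a witness $y$ with $R(e,x,y)$ and halt when found) is the standard one and is complete. The $\mathrm{COF}$ half, however, has a genuine gap in exactly the place you flag, and it is not ``standard bookkeeping'': as designed, the construction can fail outright. Unwind your coding: $\langle y,k\rangle \in W_{f(e)}$ (with $k \le y$) iff some $y' \le y$ has $M_{g(e,y')}$ halting on all of $0,\dots,k$, i.e.\ iff $k < K_y := \max_{y' \le y} k_{y'}$, where $k_{y'}$ is the least divergence point of $M_{g(e,y')}$ (taken to be $\infty$ if total). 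So column $y$ contains a \emph{non}-element only if $K_y \le y$. When $e \notin A$ every $k_{y'}$ is finite, but the reduction $g$ gives you no control whatsoever over how large it is; if, say, $k_{y'} \ge 2^{y'}$ for all $y'$, then $K_y \ge 2^y > y$ for every $y$, every admissible pair lies in $W_{f(e)}$, and $W_{f(e)} = \mathbb{N}$ is cofinite even though $e \notin A$. The triangular restriction $k \le y$ that you need for the forward direction (to make the exceptional set finite) is precisely what kills the converse.

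Moreover, the proposed repair---``careful padding of $g$ so that any failure of totality is witnessed by a diverging input suitably controlled in size''---cannot exist. If a computable transformation produced, from $M_{g(e,y)}$, an equivalent machine whose first divergence (when non-total) were guaranteed to occur at some input below a computable bound $h(e,y)$, then totality of $M_{g(e,y)}$ would be equivalent to ``halts on all inputs $\le h(e,y)$'', which is $\Sigma^0_1$; this would collapse your $\Pi^0_2$ predicate $B$ to $\Sigma^0_1$, which is impossible when $B$ is chosen $\Pi^0_2$-complete. The correct proof of $\Sigma^0_3$-hardness of $\mathrm{COF}$ requires a genuinely different mechanism---the movable-marker construction of the $\Sigma^0_3$-representation theorem (Soare, Theorem~IV.3.5, or Rogers~\S14.8), in which elements are dumped into $W_{f(e)}$ dynamically as the $\Sigma^0_3$ approximation changes its mind---rather than a static two-parameter coding. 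Since the paper only needs the statement, citing that theorem (as the authors do) is the cleanest fix; as written, your $\mathrm{COF}$ argument does not go through.
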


\begin{proof}
The reader can find a proof of these results in~\cite{rogers1987}. \qed
\end{proof}

Let us note a basic result about Turing machines: semi-infinite tape Turing machines are equivalent to two-way tape Turing machines.

\begin{proposition}
Given a Turing machine $M_2$, there exists a semi-infinite Turing machine $M_1$ such that for all $x \in \mathbb{N}$, $M_2$ halts on $x$ if and only if $M_1$ halts on $x$. 
\end{proposition}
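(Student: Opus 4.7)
The plan is to use the standard folded-tape simulation. I represent $M_2$'s bi-infinite tape by a two-track semi-infinite tape: cell $i \ge 0$ of the top track stores the content of $M_2$'s cell $i$, and cell $i \ge 0$ of the bottom track stores $M_2$'s cell $-i-1$. Accordingly, $M_1$'s alphabet is $A \times A$, and its state set is $Q \times \{T,B\}$, with the second component remembering which half of $M_2$'s tape the simulated head currently lies on.

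The initial configuration of $M_1$ on input $n$ directly mirrors that of $M_2$: the top track holds $1^n$ in cells $0,\ldots,n-1$ and blanks elsewhere, the bottom track is entirely blank, the head is at cell $0$, and the state is $(q_i,T)$. This already matches the required input format.

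Next I would specify the transitions. In state $(q,T)$ reading $(a,b)$, I apply $\delta(q,a) = (q',a',\epsilon)$, write $(a',b)$, and move by $\epsilon$ while remaining in $T$; symmetrically, in state $(q,B)$ the machine acts on the second coordinate and moves by $-\epsilon$, since the bottom track is geometrically reversed. The only delicate case is the boundary: if in $(q,T)$ at cell $0$ with $\epsilon = -1$, flip the state to $(q',B)$ without moving, so the head now represents $M_2$'s cell $-1$ (which sits at cell $0$ of the bottom track); symmetrically, from $(q,B)$ at cell $0$ with $\epsilon = +1$ flip to $(q',T)$ without moving. For the halting state, I declare transitions from $(q_h,t)$ to be stationary, so $M_1$ halts exactly when its state has first coordinate $q_h$.

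Correctness is then a routine induction on the number of simulation steps: after $k$ steps, $M_1$'s configuration encodes exactly $M_2$'s configuration after $k$ steps, and in particular $M_1$ reaches a state $(q_h,\cdot)$ iff $M_2$ reaches $q_h$. The only real obstacle is getting the direction conventions and the boundary case on the bottom track right; beyond that the construction is entirely standard and I expect no substantive difficulty.
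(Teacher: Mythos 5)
Your construction is correct, but it takes a genuinely different (and more classical) route than the paper. The paper's proof is a one-liner: whenever the head would step left off cell $0$, the machine first shifts all non-blank tape content one cell to the right and then proceeds; this keeps the tape alphabet essentially unchanged at the cost of a non-constant slowdown per boundary event (irrelevant here, since only halting equivalence is needed). You instead use the standard folded-tape simulation, storing cell $i$ and cell $-i-1$ of $M_2$ on two tracks of cell $i$, which gives a real-time simulation at the cost of squaring the alphabet and doubling the state set. Both arguments are sound, and your step-by-step induction on configurations is actually more detailed than what the paper offers. One small point you should make explicit: your boundary rule ``if in $(q,T)$ at cell $0$ with $\epsilon=-1$, flip to $B$'' presupposes that $M_1$ can detect that it is sitting on cell $0$; the standard remedy is to place a distinguished left-end marker in cell $0$ (or to make the cell-$0$ symbols a tagged copy of $A\times A$) during a short preprocessing phase, which can also convert the unary input $1^n$ into the two-track encoding $(1,\sqcup)^n$ so that the required input format is genuinely met rather than met by identification. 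With that marker added, your induction goes through without further issues.
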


\begin{proof}
The backward direction is trivial. For the converse, we can simulate any two-way Turing machine on a semi-infinite tape by instructing the machine, when it wants to move left to $0$, to copy all its non-blank symbols one step to the right. \qed
\end{proof}

In particular, this proposition allows us to consider the sets TOT and COF as sets of indices of semi-infinite Turing machines.  

Finally, we introduce our main tool for lifting results to higher dimensions: slicing.  

\begin{definition}[Slicing]  
For any $d$-dimensional cellular automaton with global function $F$, we can construct a $(d+1)$-dimensional cellular automaton with the same function $F$, which corresponds to applying the $d$-dimensional cellular automaton independently on each hyperplane $x_{d+1} = k$ (called a \textbf{slice}).  
\end{definition}  

This construction often allows us to reduce a $d$-dimensional problem to a $(d+1)$-dimensional one.


\section{One-dimensional Cellular Automata}\label{sec:proof1}

In this section, we prove our first result: the $\Pi^0_2$-completeness of the sensitivity problem for one-dimensional cellular automata. We begin with a simple lemma.

\begin{lemma}\label{lemma:blocking-word}
The problem of determining whether a given cellular automaton has an $m$-blocking word is in $\Sigma^0_2$.
\end{lemma}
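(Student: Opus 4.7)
The plan is to express the existence of an $m$-blocking word as a formula of the shape $\exists (w,p)\, \forall n\, P(w,p,n)$ with $P$ decidable, which directly places the problem in $\Sigma^0_2$.

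First I would observe that the outer quantifier ranges over candidate pairs $(w, p)$, where $w \in S^*$ is a finite word and $p$ is an integer with $[p, p+m] \subseteq [0, |w|-1]$. These candidates form a recursively enumerable set, so this outer quantifier is genuinely $\Sigma^0_1$, leaving me to argue that the $m$-blocking property of a fixed candidate is $\Pi^0_1$.

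Unfolding the definition, $(w,p)$ is $m$-blocking iff for every $n \in \mathbb{N}$ and every pair $u, v \in \text{Cyl}(w, [0, |w|-1])$ we have $F^n(u)_{|[p, p+m]} = F^n(v)_{|[p, p+m]}$. A priori the quantification over $u, v$ is uncountable, so the key step is to collapse it down to a finite check. For this I would use the bounded radius of $F$: if $r$ is the radius of the local rule, then $F^n(u)_{|[p, p+m]}$ depends only on $u|_{[p - nr,\, p + m + nr]}$. Hence for a fixed $n$, it suffices to verify that all finite patterns on $[p - nr,\, p + m + nr]$ which agree with $w$ on the overlap with $[0, |w|-1]$ produce the same image on $[p, p+m]$ under $n$ iterations of the local rule. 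There are only finitely many such extensions, each realizable as a bi-infinite configuration in $\text{Cyl}(w, [0, |w|-1])$, so the inner predicate $P(w, p, n)$ is decidable uniformly in its arguments.

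Combining the two steps yields the desired $\Sigma^0_2$ bound. The only mildly subtle point is the passage from uncountably many bi-infinite configurations to finitely many finite patterns; it is a direct consequence of the finite-radius nature of a cellular automaton and poses no real obstacle.
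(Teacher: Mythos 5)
Your proof is correct and follows essentially the same route as the paper: an existential quantifier over the finite candidate $(w,p)$ followed by a universal quantifier over $n$ and finitely many finite patterns, with the finite-radius argument making the inner predicate decidable (a detail the paper compresses into ``we only need to quantify over finite objects''). The only cosmetic difference is that you phrase everything in dimension $1$, whereas the paper states the formula for general $d$ (the lemma is reused in Lemma~\ref{lemma:uBound}); your window argument generalizes verbatim by replacing $[p-nr,\,p+m+nr]$ with the corresponding blown-up box in $\mathbb{Z}^d$.
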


\begin{proof}
Observe that in the definition of a blocking word, we only need to quantify over finite objects. Hence, having an $m$-blocking word can be written as:

\begin{equation}
\begin{split}
    \exists K \subset \mathbb{Z}^d, \exists p \in \mathbb{Z}^d, \exists w \in S^{K}, 
    \forall u, v \in (S^d)^*, n \in \mathbb{N}, \\
    (\sigma^p([0, m]^d) \subset K \land w \sqsubset_K u \land w \sqsubset_K v) \\
    \implies F^n(u)_{|\sigma^p([0, m]^d)} = F^n(v)_{|\sigma^p([0, m]^d)}
\end{split}
\end{equation}

This formulation clearly places the problem in $\Sigma^0_2$. \qed
\end{proof}

\subsection{Upper Bound on Complexity of the Sensitivity Problem}

The sensitivity problem for one-dimensional cellular automata is in $\Pi^0_2$. The proof follows directly from Lemma~\ref{lemma:blocking-word}. We note that this result was already known from Sablik and Theyssier \cite{sablik2011topological}.

\begin{lemma}\label{lemma:sensitivity-pi02}
The problem of determining whether a one-dimensional cellular automaton is sensitive to initial conditions is in $\Pi^0_2$.
\end{lemma}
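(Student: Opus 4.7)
The plan is to combine Lemma~\ref{lemma:blocking-word} with K\r{u}rka's characterization stated just above (the theorem attributed to K\r{u}rka 2008). Given a one-dimensional cellular automaton $F$, let $r$ denote its radius; this integer is read directly from the description of $F$, so it is a computable function of the input. By K\r{u}rka's theorem, $F$ is sensitive if and only if $F$ admits no $r$-blocking word.

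Applying Lemma~\ref{lemma:blocking-word} with $m := r$, the predicate ``$F$ has an $r$-blocking word'' lies in $\Sigma^0_2$. Its negation ``$F$ has no $r$-blocking word'' is therefore in $\Pi^0_2$, and by K\r{u}rka's equivalence this negation is exactly ``$F$ is sensitive''. This gives the desired upper bound.

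The only thing worth spelling out is that the $\Sigma^0_2$ formula produced by Lemma~\ref{lemma:blocking-word} is uniform in the parameter $m$, so substituting the computable value $m = r$ keeps the complexity the same. There is no real technical obstacle here: Kůrka's theorem does all the dynamical work, and the quantifier counting has already been carried out in Lemma~\ref{lemma:blocking-word}. The substantive content of the section lies in the matching $\Pi^0_2$-hardness result, not in this upper bound.
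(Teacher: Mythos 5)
Your proposal is correct and follows essentially the same route as the paper: both reduce sensitivity to the non-existence of a blocking word via K\r{u}rka's characterization and then negate the $\Sigma^0_2$ formula of Lemma~\ref{lemma:blocking-word}. You are merely more explicit about fixing $m=r$ and the computability of the radius, which the paper leaves implicit.
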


\begin{proof}
A cellular automaton is sensitive to initial conditions if and only if it does not have a blocking word of any length. This is clearly a $\Pi^0_2$ formula, as it negates a $\Sigma^0_2$ formula (from Lemma~\ref{lemma:blocking-word}). \qed
\end{proof}

\subsection{Construction of $G_e$}

To prove $\Pi^0_2$-hardness, we reduce from TOT, which is known to be $\Pi^0_2$-hard. Given a number $e$, we denote the associated Turing machine by $M_e = (Q, A, q_i, q_h, \delta)$. 

It is important to note that we consider configurations starting from any state and with any distribution of symbols—not just those that result from a valid evolution of the Turing machine from its initial state. This includes configurations that do not correspond to any correct computation. Such 'degenerate' configurations arise because, in a cellular automaton, any configuration can serve as an initial one. We will explain how to address this issue in the following sections of the construction.

We construct a cellular automaton $G_e$ as follows:

The state set of $G_e$ is $(1 \cup \{\sqcup\}) \times ((Q \cup \{\sqcup\}) \times A) \times (\{<, >, p, \sqcup\} \cup X)$, where: $<$ and $>$ are delimiter symbols that partition the space into computational blocks and $X= \{x_r, x_l, x_0\}$ is a set of elements that handle the extension of the computational zone and restart the Turing machine on the input tape to prevent degenerate configurations.

It is a three-tape cellular automaton with a witness input tape, a working tape, and a delimiter tape.

The cellular automaton $G_e$ simulates multiple copies of $M_e$, where each copy operates within its own computational block, delimited by sequences of $<$ and $>$ symbols. A typical configuration is shown in Figure~\ref{fig:configuration}.

\begin{figure}[htbp!]
\centering
\begin{tikzpicture}[scale=0.7]
    \foreach \x in {0,...,3} {
        \draw (\x,0) rectangle (\x+1,1);
        \node at (\x+0.5,0.5) {$>$};
    }
    \draw (4,0) rectangle (5,1);
    \node at (4.5,0.5) {$x_l$};
    \foreach \x in {5,...,7} {
        \draw (\x,0) rectangle (\x+1,1);
        \node at (\x+0.5,0.5) {$<$};
    }
    
    \foreach \x in {8,...,9} {
        \draw (\x,0) rectangle (\x+1,1);
        \node at (\x+0.5,0.5) {$>$};
    }
    \draw (10,0) rectangle (11,1);
    \node at (10.5,0.5) {$x_r$};
    \foreach \x in {11,...,12} {
        \draw (\x,0) rectangle (\x+1,1);
        \node at (\x+0.5,0.5) {$<$};
    }
    
    \node at (-0.5,0.5) {$\cdots$};
    \node at (13.5,0.5) {$\cdots$};
    
    \foreach \x in {0,...,12} {
        \draw (\x,-1) rectangle (\x+1,0);
    }
    \node at (0.5,-0.5) {$a'$};
    \node at (1.5,-0.5) {$a'$};
    \node at (2.5,-0.5) {$b'$};
    \node at (3.5,-0.5) {$q, b'$};
    \node at (4.5,-0.5) {$c'$};
    \node at (5.5,-0.5) {$c'$};
    \node at (6.5,-0.5) {$d'$};
    \node at (7.5,-0.5) {$d'$};
    \node at (8.5,-0.5) {$a'$};
    \node at (9.5,-0.5) {$b'$};
    \node at (10.5,-0.5) {$q, c'$};
    \node at (11.5,-0.5) {$d'$};
    \node at (12.5,-0.5) {$a'$};
    
     \foreach \x in {0,...,12} {
        \draw (\x,-1) rectangle (\x+1,-2);
    }

    \node at (0.5,-1.5) {$a$};
    \node at (1.5,-1.5) {$a$};
    \node at (2.5,-1.5) {$b$};
    \node at (3.5,-1.5) {$b$};
    \node at (4.5,-1.5) {$c$};
    \node at (5.5,-1.5) {$c$};
    \node at (6.5,-1.5) {$d$};
    \node at (7.5,-1.5) {$d$};
    \node at (8.5,-1.5) {$a$};
    \node at (9.5,-1.5) {$b$};
    \node at (10.5,-1.5) {$c$};
    \node at (11.5,-1.5) {$d$};
    \node at (12.5,-1.5) {$a$};

     \node at (-0.5,-1.5) {$\cdots$};
    \node at (13.5,-1.5) {$\cdots$};
    \node at (-0.5,-0.5) {$\cdots$};
    \node at (13.5,-0.5) {$\cdots$};
\end{tikzpicture}
\caption{Structure of computational blocks in $G_e$. The upper row shows the delimiters and machine heads, while the lower rows show the tape content.}
\label{fig:configuration}
\end{figure}

The transition rules of $G_e$ enforce the following behaviors:

\begin{enumerate}
	\item The set $X$ contains $x_r$, which moves to the right on computational blocks, $x_l$, which moves to the left, and $x_0$, which moves to the left while resetting the computation on the witness tape. Symbols move back and forth in the computational zone and always try to extend it to the right when possible. When this happens, the Turing machine restarts on the input tape (eventually eliminating degenerate configurations).
	\item \texttt{<} and \texttt{>} delimit computational blocks. $x_r$ can only move to the right if it is to the left of a \texttt{<}; otherwise, it becomes an $x_l$. Similarly $x_0$ and $x_l$ can only move to the left if they are to the right of a \texttt{>}; otherwise, they become an $x_r$.
	\item The blank symbol on the witness tape propagates to the right on the computational block, preventing the input from being divided by a blank.  
	\item When a machine halts, it writes a blank symbol that propagates throughout the computational block.
	\item The $x$ symbol can extend its computational block if and only if the machine to its right has been destroyed.
	\item $p$ is a particle that always travels to the right on blank symbols of the first tape and gets destroyed when it encounters non-blank cells. It allows us to establish sensitivity.  
\end{enumerate}

We provide the complete rule of the cellular automaton $G_e$ in Figure~\ref{fig:transition1D}.

\begin{figure}[htbp!]
	\begin{center}
	
\begin{tikzpicture}

    \node (tab1) at (0,0) {
        \begin{tabular}{| >{\centering\arraybackslash}m{0.75cm}| >{\centering\arraybackslash}m{0.75cm}| >{\centering\arraybackslash}m{0.75cm}|}
        \hline
        \texttt{s} & \texttt{s} & \cellcolor{black}~ \\
        \hline
	\cellcolor{black}~ & q, b & \cellcolor{black} \\
        \hline
        \cellcolor{black} & \cellcolor{black} & \cellcolor{black} \\
        \hline
        \end{tabular}
    };

    \node (tab2) at (3, 0){
        \begin{tabular}{| >{\centering\arraybackslash}m{0.75cm}| >{\centering\arraybackslash}m{0.75cm}| >{\centering\arraybackslash}m{0.75cm}|}
        \hline
	\cellcolor{black}\\
	\hline
	b'\\
        \hline
          \cellcolor{black} \\
        \hline
        \end{tabular}
    };

    \draw[->, thick] (tab1) -- (tab2);
    \node at (1.5, 0.8) {$\delta_e(q, b) = (q', b',  -1)$};
    \node at (1.5, 1.2) {$s \in \{\texttt{<}, \texttt{>}\}$};
    
    \node (tab3) at (6,0) {
        \begin{tabular}{| >{\centering\arraybackslash}m{0.75cm}| >{\centering\arraybackslash}m{0.75cm}| >{\centering\arraybackslash}m{0.75cm}|}
        \hline
        \cellcolor{black} & \texttt{s} & \texttt{s}\\
        \hline
	\cellcolor{black} & q, b & \cellcolor{black} \\
        \hline
        \cellcolor{black} & \cellcolor{black} & \cellcolor{black} \\
        \hline
        \end{tabular}
    };

    \node (tab4) at (9,0) {
        \begin{tabular}{| >{\centering\arraybackslash}m{0.75cm}| >{\centering\arraybackslash}m{0.75cm}| >{\centering\arraybackslash}m{0.75cm}|}
        \hline
	\cellcolor{black} \\
        \hline
         b'  \\
        \hline
        \cellcolor{black} \\
        \hline
        \end{tabular}
    };
    \draw[->, thick] (tab3) -- (tab4);
    \node at (7.5, 0.8) {$\delta_e(q, b) = (q', b', 1)$};
     \node at (7.5, 1.2) {$s \in \{\texttt{<}, \texttt{>}\}$};

    \node (tab5) at (0,-2) {
        \begin{tabular}{| >{\centering\arraybackslash}m{0.75cm}| >{\centering\arraybackslash}m{0.75cm}| >{\centering\arraybackslash}m{0.75cm}|}
        \hline
        \cellcolor{black} & \texttt{s} & \texttt{s} \\
        \hline
	a & b & q, c \\
        \hline
        \cellcolor{black} & \cellcolor{black} & \cellcolor{black} \\
        \hline
        \end{tabular}
    };

    \node (tab6) at (3,-2) {
        \begin{tabular}{| >{\centering\arraybackslash}m{0.75cm}| >{\centering\arraybackslash}m{0.75cm}| >{\centering\arraybackslash}m{0.75cm}|}
        \hline
	\cellcolor{black} \\
        \hline
        q', b \\
        \hline
          \cellcolor{black} \\
        \hline
        \end{tabular}
    };
    \draw[->, thick] (tab5) -- (tab6);
    \node at (1.5, -1.2) {$\delta_e(q, c) = (q', c', -1)$};
     \node at (1.5, -0.8) {$s \in \{\texttt{<}, \texttt{>}\}$};
    
    \node (tab7) at (6,-2) {
        \begin{tabular}{| >{\centering\arraybackslash}m{0.75cm}| >{\centering\arraybackslash}m{0.75cm}| >{\centering\arraybackslash}m{0.75cm}|}
        \hline
        \texttt{s} & \texttt{s} & \cellcolor{black} \\
        \hline
	q, a & b & c \\
        \hline
        \cellcolor{black} & \cellcolor{black} & \cellcolor{black} \\
        \hline
        \end{tabular}
    };

    \node (tab8) at (9, -2){
        \begin{tabular}{| >{\centering\arraybackslash}m{0.75cm}| >{\centering\arraybackslash}m{0.75cm}| >{\centering\arraybackslash}m{0.75cm}|}
        \hline
        \cellcolor{black} \\
        \hline
	q', b \\
        \hline
          \cellcolor{black} \\
        \hline
        \end{tabular}
    };

    \draw[->, thick] (tab7) -- (tab8);
    \node at (7.5, -1.2) {$\delta_e(q, a) = (q', a', 1)$};
     \node at (7.5, -0.8) {$s \in \{\texttt{<}, \texttt{>}\}$};
    
    \node (tab9) at (0,-4) {
        \begin{tabular}{| >{\centering\arraybackslash}m{0.75cm}| >{\centering\arraybackslash}m{0.75cm}| >{\centering\arraybackslash}m{0.75cm}|}
        \hline
        \texttt{>} & $x_{l/0}$ & \cellcolor{black}\\
        \hline
	\cellcolor{black} & s & \cellcolor{black} \\
        \hline
        \cellcolor{black} & \cellcolor{black} & \cellcolor{black} \\
        \hline
        \end{tabular}
    };

    \node (tab10) at (3,-4) {
        \begin{tabular}{| >{\centering\arraybackslash}m{0.75cm}| >{\centering\arraybackslash}m{0.75cm}| >{\centering\arraybackslash}m{0.75cm}|}
        \hline
	\texttt{<} \\
        \hline
        \cellcolor{black} \\
        \hline
          \cellcolor{black} \\
        \hline
        \end{tabular}
    };
    \draw[->, thick] (tab9) -- (tab10);
    \node at (1.5, -3.2) {$s \neq (q_h, b)$};
    
    \node (tab11) at (6,-4) {
        \begin{tabular}{| >{\centering\arraybackslash}m{0.75cm}| >{\centering\arraybackslash}m{0.75cm}| >{\centering\arraybackslash}m{0.75cm}|}
        \hline
        \cellcolor{black} & $x_r$ & \texttt{<} \\
        \hline
	\cellcolor{black}  & s & \cellcolor{black} \\
        \hline
        \cellcolor{black} & \cellcolor{black} & \cellcolor{black} \\
        \hline
        \end{tabular}
    };

    \node (tab12) at (9, -4){
        \begin{tabular}{| >{\centering\arraybackslash}m{0.75cm}| >{\centering\arraybackslash}m{0.75cm}| >{\centering\arraybackslash}m{0.75cm}|}
        \hline
        \texttt{>} \\
        \hline
	\cellcolor{black}  \\
        \hline
          \cellcolor{black} \\
        \hline
        \end{tabular}
    };

    \draw[->, thick] (tab11) -- (tab12);
    \node at (7.5, -3.2) {$s \neq (q_h, b)$};
    
    \node (tab13) at (0,-6) {
        \begin{tabular}{| >{\centering\arraybackslash}m{0.75cm}| >{\centering\arraybackslash}m{0.75cm}| >{\centering\arraybackslash}m{0.75cm}|}
        \hline
        \cellcolor{black} & $x_{r/l/0}$ & \cellcolor{black}\\
        \hline
	\cellcolor{black} & $q_h, b$ & \cellcolor{black} \\
        \hline
        \cellcolor{black} & \cellcolor{black} & \cellcolor{black} \\
        \hline
        \end{tabular}
    };

    \node (tab14) at (3,-6) {
        \begin{tabular}{| >{\centering\arraybackslash}m{0.75cm}| >{\centering\arraybackslash}m{0.75cm}| >{\centering\arraybackslash}m{0.75cm}|}
        \hline
	 \\
        \hline
        \cellcolor{black} \\
        \hline
          \cellcolor{black} \\
        \hline
        \end{tabular}
    };
    \draw[->, thick] (tab13) -- (tab14);
    
    \node (tab15) at (6,-6) {
        \begin{tabular}{| >{\centering\arraybackslash}m{0.75cm}| >{\centering\arraybackslash}m{0.75cm}| >{\centering\arraybackslash}m{0.75cm}|}
        \hline
        $x_r$ &  & \cellcolor{black} \\
        \hline
	s & \cellcolor{black} & \cellcolor{black}\\
        \hline
        \cellcolor{black} & \cellcolor{black} & \cellcolor{black} \\
        \hline
        \end{tabular}
    };

    \node (tab16) at (9, -6){
        \begin{tabular}{| >{\centering\arraybackslash}m{0.75cm}| >{\centering\arraybackslash}m{0.75cm}| >{\centering\arraybackslash}m{0.75cm}|}
        \hline
        $x_0$ \\
        \hline
	\cellcolor{black} \\
        \hline
          \cellcolor{black} \\
        \hline
        \end{tabular}
    };

    \draw[->, thick] (tab15) -- (tab16);
    \node at (7.5, -5.2) {$s \neq (q_h, b)$};

    \node (tab17) at (0,-8) {
        \begin{tabular}{| >{\centering\arraybackslash}m{0.75cm}| >{\centering\arraybackslash}m{0.75cm}| >{\centering\arraybackslash}m{0.75cm}|}
        \hline
        \cellcolor{black} & & \cellcolor{black}\\
        \hline
	\cellcolor{black} & a & \cellcolor{black} \\
        \hline
        \cellcolor{black} & a' & \cellcolor{black}\\
        \hline
        \end{tabular}
    };

    \node (tab18) at (3,-8) {
        \begin{tabular}{| >{\centering\arraybackslash}m{0.75cm}| >{\centering\arraybackslash}m{0.75cm}| >{\centering\arraybackslash}m{0.75cm}|}
        \hline
	\cellcolor{black} \\
        \hline
         \\
        \hline
        \\
        \hline
        \end{tabular}
    };
    \draw[->, thick] (tab17) -- (tab18);
    
    \node (tab19) at (6,-8) {
        \begin{tabular}{| >{\centering\arraybackslash}m{0.75cm}| >{\centering\arraybackslash}m{0.75cm}| >{\centering\arraybackslash}m{0.75cm}|}
        \hline
        \texttt{>} & $x_0$ & \cellcolor{black}\\
        \hline
	\cellcolor{black} & a & \cellcolor{black} \\
        \hline
        \cellcolor{black} & a' & \cellcolor{black}\\
        \hline
        \end{tabular}
    };

    \node (tab20) at (9, -8){
        \begin{tabular}{| >{\centering\arraybackslash}m{0.75cm}| >{\centering\arraybackslash}m{0.75cm}| >{\centering\arraybackslash}m{0.75cm}|}
        \hline
        \cellcolor{black} \\
        \hline
	a' \\
        \hline
        a'\\
        \hline
        \end{tabular}
    };

    \draw[->, thick] (tab19) -- (tab20);
    
    \node (tab21) at (0,-10) {
        \begin{tabular}{| >{\centering\arraybackslash}m{0.75cm}| >{\centering\arraybackslash}m{0.75cm}| >{\centering\arraybackslash}m{0.75cm}|}
        \hline
        \cellcolor{black} & \texttt{>}  & \\
       	\hline
	\cellcolor{black} & \cellcolor{black} & \cellcolor{black} \\
        \hline
        \cellcolor{black} & \cellcolor{black} & \cellcolor{black} \\
        \hline
        \end{tabular}
    };

    \node (tab22) at (3,-10) {
        \begin{tabular}{| >{\centering\arraybackslash}m{0.75cm}| >{\centering\arraybackslash}m{0.75cm}| >{\centering\arraybackslash}m{0.75cm}|}
        \hline
          \\
       	\hline
	  \cellcolor{black} \\
        \hline
          \cellcolor{black} \\
        \hline
        \end{tabular}
    };
    \draw[->, thick] (tab21) -- (tab22);
    
    \node (tab23) at (6,-10) {
        \begin{tabular}{| >{\centering\arraybackslash}m{0.75cm}| >{\centering\arraybackslash}m{0.75cm}| >{\centering\arraybackslash}m{0.75cm}|}
        \hline
         & \texttt{<} & \cellcolor{black} \\
        \hline
        \cellcolor{black} & \cellcolor{black} & \cellcolor{black} \\
        \hline
        \cellcolor{black} & \cellcolor{black} & \cellcolor{black} \\
        \hline
        \end{tabular}
    };

    \node (tab24) at (9, -10){
        \begin{tabular}{| >{\centering\arraybackslash}m{0.75cm}| >{\centering\arraybackslash}m{0.75cm}| >{\centering\arraybackslash}m{0.75cm}|}
        \hline
          \\
        \hline
          \cellcolor{black} \\
        \hline
          \cellcolor{black} \\
        \hline
        \end{tabular}
    };

    \draw[->, thick] (tab23) -- (tab24);
    
    \node (tab25) at (0,-12) {
        \begin{tabular}{| >{\centering\arraybackslash}m{0.75cm}| >{\centering\arraybackslash}m{0.75cm}| >{\centering\arraybackslash}m{0.75cm}|}
        \hline
        \cellcolor{black} & p & \cellcolor{black}\\
        \hline
        \cellcolor{black} & \cellcolor{black} & \cellcolor{black} \\
        \hline
        \cellcolor{black} & \cellcolor{black} & \cellcolor{black} \\
        \hline
        \end{tabular}
    };

    \node (tab26) at (3,-12) {
        \begin{tabular}{| >{\centering\arraybackslash}m{0.75cm}| >{\centering\arraybackslash}m{0.75cm}| >{\centering\arraybackslash}m{0.75cm}|}
        \hline
	\\
        \hline
          \cellcolor{black} \\
        \hline
          \cellcolor{black} \\
        \hline
        \end{tabular}
    };
    \draw[->, thick] (tab25) -- (tab26);
    
    \node (tab27) at (6,-12) {
        \begin{tabular}{| >{\centering\arraybackslash}m{0.75cm}| >{\centering\arraybackslash}m{0.75cm}| >{\centering\arraybackslash}m{0.75cm}|}
        \hline
         p &  &\cellcolor{black} \\
        \hline 
        \cellcolor{black} & \cellcolor{black} & \cellcolor{black} \\
        \hline
        \cellcolor{black} & \cellcolor{black} & \cellcolor{black} \\
        \hline
        \end{tabular}
    };

    \node (tab28) at (9, -12){
        \begin{tabular}{| >{\centering\arraybackslash}m{0.75cm}| >{\centering\arraybackslash}m{0.75cm}| >{\centering\arraybackslash}m{0.75cm}|}
        \hline
        p \\
        \hline
          \cellcolor{black} \\
        \hline
          \cellcolor{black} \\
        \hline
        \end{tabular}
    };

    \draw[->, thick] (tab27) -- (tab28);
    
    \node (tab29) at (0,-14) {
        \begin{tabular}{| >{\centering\arraybackslash}m{0.75cm}| >{\centering\arraybackslash}m{0.75cm}| >{\centering\arraybackslash}m{0.75cm}|}
        \hline
        \cellcolor{black} & \texttt{>} & \texttt{<} \\
        \hline
        \cellcolor{black} & \cellcolor{black} & \cellcolor{black} \\
        \hline
        \cellcolor{black} & \cellcolor{black} & \cellcolor{black} \\
        \hline
        \end{tabular}
    };

    \node (tab30) at (3,-14) {
        \begin{tabular}{| >{\centering\arraybackslash}m{0.75cm}| >{\centering\arraybackslash}m{0.75cm}| >{\centering\arraybackslash}m{0.75cm}|}
        \hline
	$x_r$\\
        \hline
          \cellcolor{black} \\
        \hline
          \cellcolor{black} \\
        \hline
        \end{tabular}
    };
    \draw[->, thick] (tab29) -- (tab30);

    \node (tab31) at (6,-14) {
        \begin{tabular}{| >{\centering\arraybackslash}m{0.75cm}| >{\centering\arraybackslash}m{0.75cm}| >{\centering\arraybackslash}m{0.75cm}|}
        \hline
	\cellcolor{black} & \texttt{<} & \cellcolor{black} \\
        \hline
        \cellcolor{black} & \cellcolor{black} & \cellcolor{black} \\
        \hline
         & a & \cellcolor{black}\\
         \hline
        \end{tabular}
    };

    \node (tab32) at (9, -14){
        \begin{tabular}{| >{\centering\arraybackslash}m{0.75cm}| >{\centering\arraybackslash}m{0.75cm}| >{\centering\arraybackslash}m{0.75cm}|}
        \hline
        \cellcolor{black}\\
        \hline
	\cellcolor{black} \\
        \hline
        \\
        \hline
        \end{tabular}
    };

    \draw[->, thick] (tab31) -- (tab32);

    \node (tab33) at (0,-16) {
        \begin{tabular}{| >{\centering\arraybackslash}m{0.75cm}| >{\centering\arraybackslash}m{0.75cm}| >{\centering\arraybackslash}m{0.75cm}|}
        \hline
	\texttt{<} & $x_l$ & \cellcolor{black} \\
        \hline
        \cellcolor{black} & s & \cellcolor{black} \\
        \hline
        \cellcolor{black} & \cellcolor{black} & \cellcolor{black} \\
        \hline
        \end{tabular}
    };

    \node (tab34) at (3,-16) {
        \begin{tabular}{| >{\centering\arraybackslash}m{0.75cm}| >{\centering\arraybackslash}m{0.75cm}| >{\centering\arraybackslash}m{0.75cm}|}
        \hline
	$x_r$\\
        \hline
        \cellcolor{black} \\
        \hline
          \cellcolor{black} \\
        \hline
        \end{tabular}
    };
    \draw[->, thick] (tab33) -- (tab34);
     \node at (1.5, -15.1) {$s \neq (q_h, b)$};
    
    \node (tab35) at (6,-16) {
        \begin{tabular}{| >{\centering\arraybackslash}m{0.75cm}| >{\centering\arraybackslash}m{0.75cm}| >{\centering\arraybackslash}m{0.75cm}|}
        \hline
        \cellcolor{black} & $x_r$ & \texttt{>} \\
        \hline
	\cellcolor{black} & s & \cellcolor{black} \\
        \hline
        \cellcolor{black} & \cellcolor{black} & \cellcolor{black} \\
        \hline
        \end{tabular}
    };

    \node (tab36) at (9, -16){
        \begin{tabular}{| >{\centering\arraybackslash}m{0.75cm}| >{\centering\arraybackslash}m{0.75cm}| >{\centering\arraybackslash}m{0.75cm}|}
        \hline
         $x_l$ \\
        \hline
        \cellcolor{black}\\
        \hline
          \cellcolor{black} \\
        \hline
        \end{tabular}
    };

    \draw[->, thick] (tab35) -- (tab36);
     \node at (7.5, -15.1) {$s \neq (q_h, b)$};

    \node (tab37) at (0,-18) {
        \begin{tabular}{| >{\centering\arraybackslash}m{0.75cm}| >{\centering\arraybackslash}m{0.75cm}| >{\centering\arraybackslash}m{0.75cm}|}
        \hline
        \texttt{<} & $x_0$ & \cellcolor{black} \\
        \hline
	\cellcolor{black} & a & \cellcolor{black} \\
        \hline
        \cellcolor{black} & a' & \cellcolor{black}\\
        \hline
        \end{tabular}
    };

    \node (tab38) at (3, -18){
        \begin{tabular}{| >{\centering\arraybackslash}m{0.75cm}| >{\centering\arraybackslash}m{0.75cm}| >{\centering\arraybackslash}m{0.75cm}|}
        \hline
         \cellcolor{black} \\
        \hline
	$q_i$, a'  \\
        \hline
        a'\\
        \hline
        \end{tabular}
    };

    \draw[->, thick] (tab37) -- (tab38);

    \node (tab39) at (6,-18) {
        \begin{tabular}{| >{\centering\arraybackslash}m{0.75cm}| >{\centering\arraybackslash}m{0.75cm}| >{\centering\arraybackslash}m{0.75cm}|}
        \hline
	\cellcolor{black} & $x_r$ & \\
        \hline
        \cellcolor{black} & s & \cellcolor{black}\\
        \hline
        \cellcolor{black} & \cellcolor{black} & \cellcolor{black} \\
        \hline
        \end{tabular}
    };

    \node (tab40) at (9, -18){
        \begin{tabular}{| >{\centering\arraybackslash}m{0.75cm}| >{\centering\arraybackslash}m{0.75cm}| >{\centering\arraybackslash}m{0.75cm}|}
        \hline
        \texttt{>} \\
        \hline
	\cellcolor{black} \\
        \hline
          \cellcolor{black} \\
        \hline
        \end{tabular}
    };

    \draw[->, thick] (tab39) -- (tab40);
    \node at (7.5, -17.2) {$s \neq (q_h, b)$};

\end{tikzpicture}
\end{center}
\caption{Complete transition function of the cellular automaton $G_e$. Black cells act as wildcards and can match any state in the neighborhood. When no rule applies, the cell remains unchanged.}
\label{fig:transition1D}
\end{figure}

\subsection{Lower Bound and $\Pi^0_2$-Hardness}

\begin{lemma}\label{lemma:1Dhard}
For any $e \in \mathbb{N}$, the Turing machine $M_e$ halts on all inputs if and only if the cellular automaton $G_e$ is sensitive.
\end{lemma}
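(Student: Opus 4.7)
The plan is to prove both directions via K{\r{u}}rka's characterisation recalled above: $G_e$ is sensitive if and only if it admits no $r$-blocking word, where $r$ is the radius of $G_e$.

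For the easy direction I would argue the contrapositive. If $M_e$ fails to halt on some input $n_0$, I would construct an $r$-blocking word $w$ by encoding one fully-formed computational block long enough that the window $[p, p+r]$ lies strictly in its interior: on the delimiter tape a long run of $>$, then an $x$-particle, then a matching run of $<$; on the working tape, the initial state $q_i$ reading the first $1$ of $1^{n_0}$; on the witness tape, $1^{n_0}$. For any $u, v \in \text{Cyl}(w)$ the dynamics inside the block depend only on $w$: because $M_e$ loops on $n_0$, the halting rules of item~4 never fire, the extension rules of item~5 cannot activate from outside (the block is never halted), and the fences $<, >$ together with the interior $x$-particle evolve as a function of $w$'s content alone. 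By the radius-$r$ locality of $G_e$, the pattern on $[p, p+r]$ in $F^n(u)$ is then independent of $u$, so $w$ is $r$-blocking and $G_e$ is not sensitive.

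For the converse, assume $M_e \in \text{TOT}$, fix any candidate blocking word $w$ with window $[p, p+r] \subseteq [0, |w|-1]$, and take the configuration $u$ that equals $w$ on $[0, |w|-1]$ and is blank on all three tapes outside. Construct $v$ by modifying $u$ at a single cell, placing a $p$-particle on the delimiter tape at position $-k-1$ for a large $k$. Since the modification lies outside the support of $w$, we still have $v \in \text{Cyl}(w)$. In the evolution of $v$, the $p$-particle moves one cell per step to the right through blank delimiter cells, and along the way the delimiter tape at the cells it occupies carries the symbol $p$; when it enters $[p, p+r]$ it puts $p$ on a delimiter cell that never carries $p$ in $F^n(u)$, so the disagreement proves $w$ is not $r$-blocking.

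The technical content is arranging that the $p$-particle survives long enough to reach the window. Using $M_e \in \text{TOT}$ together with the $x_0$-reset rule, every computational block occurring in the orbit of $u$ undergoes infinitely many halt-absorb-restart cycles: the simulated $M_e$ halts, blanks propagate across the block, a neighbouring $x_r$ absorbs it, $x_0$ restarts a fresh computation that also halts, and so on. Consequently, blocks grow without bound, and for every cell the delimiter tape is blank at arbitrarily large times. By inspecting a long enough prefix of the orbit of $u$ one chooses $k$ so that the entire space-time diagonal the $p$-particle would traverse consists of cells whose delimiter tape is blank exactly when the particle is set to visit them, and then $p$ rides this blank corridor straight into $[p, p+r]$. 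The main obstacle is precisely this coupling between the deterministic one-step-per-tick motion of $p$ and the configuration-dependent schedule on which the dynamics of $u$ opens a blank corridor; the $x_0$-reset mechanism is what makes the eventual clearing of each block robust to any degenerate content $u$ may happen to carry inside the support of $w$.
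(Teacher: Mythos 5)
Your forward direction ($e \in \mathrm{TOT} \Rightarrow$ sensitive) is essentially the paper's argument: surround $w$ by blanks, let the blocks inside $w$ extend rightward, restart, halt and erase themselves one after another until the region is blank, then release a $p$-particle from far to the left. (Your phrase ``infinitely many halt-absorb-restart cycles'' overstates what is needed: there are finitely many blocks in $w$, each is destroyed permanently once its machine halts, and after finitely many steps the whole region is blank forever, so no delicate space-time corridor scheduling is required.)

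The backward direction has a genuine gap. Your blocking word consists of a \emph{single} computational block, but by rule~5 (and the rules sending $[\,x_r,\ \text{blank}\,]$ to $[\,\texttt{>},\ x_0\,]$ versus $[\,x_r,\ \texttt{>}\,]$ to $x_l$), whether the block extends to the right is decided by the cell immediately to the right of $w$ --- a cell \emph{not} fixed by the cylinder $\mathrm{Cyl}(w)$. Take $u \in \mathrm{Cyl}(w)$ with a never-halting block just to the right of $w$, and $v \in \mathrm{Cyl}(w)$ blank to the right of $w$: in $v$ the $x_r$ extends the block and spawns an $x_0$ that sweeps leftward through the entire block resetting the working tape, while in $u$ the $x_r$ merely turns into $x_l$. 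Since the $x$-particles and the reset sweep traverse every cell of the block, the window $[p,p+r]$ sees different contents at some time, so your $w$ is \emph{not} $r$-blocking; your parenthetical justification ``the extension rules cannot activate from outside (the block is never halted)'' conflates the halting of the block itself with the destruction of the machine to its \emph{right}, which is what the extension rule actually tests. The paper closes exactly this hole by taking $uu$, two adjacent copies of the block on input $n_0$: the right copy's machine never halts, so its left wall of \texttt{>} symbols is never erased, the left copy can therefore never extend, and the evolution of the left copy's interior (where the window is placed) is determined by $w$ alone. With that one modification your argument goes through.
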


\begin{proof}
We now prove that $e \in \text{TOT}$ if and only if $G_e$ is sensitive.

\paragraph{If $e \in \text{TOT}$:} We need to show that $G_e$ is sensitive. Let $w$ be any word. We can take $y \in \text{Cyl}(w)$ such that $y$ has only blank symbols on all tapes to the right of $w$, and to the left, there is only one $p$ particle moving to the right and blank symbols.

Since $e \in \text{TOT}$, all computational blocks in $w$ will eventually be destroyed. Indeed, in the rightmost computational block in $w$, the $x$ symbols will eventually be inserted and then be able to extend the computational block arbitrarily far, allowing the Turing machine to compute on its input without being restarted and with arbitrary large space. Hence, the rightmost Turing machine will halt and erase itself in finitely many steps, leaving space for the machine to its left. Therefore, in finitely many steps, $w$ will become blank. Thus, the particle we placed to the left of $w$ can pass through $w$ if the particle is placed far enough. Therefore, since we can choose whether or not to place the particle, this establishes sensitivity.

\paragraph{If $e \notin \text{TOT}$:} Then there exists an input $n$ such that $M_e$ runs forever on $n$ without halting. Consider the word pattern $uu$ where $u$ is a computational block of size at least radius of $G_e$ containing input $n$. The machine simulated in the second copy of $u$ will never halt and will never be destroyed. Consequently, the machine in the first copy of $u$ remains confined to its computational block and likewise never halts. This creates a barrier that no information can cross. Therefore $uu$ is an $r$-blocking word, proving that $G_e$ is not sensitive. \qed
\end{proof}

Thus, we have shown that $G_e$ is sensitive if and only if $e \in \text{TOT}$.

\begin{proof}[Theorem~\ref{thm:dim1}]
By Lemma~\ref{lemma:sensitivity-pi02}, we know that the sensitivity problem is in $\Pi^0_2$, and by Lemma~\ref{lemma:1Dhard}, we have hardness. We conclude that determining sensitivity for one-dimensional cellular automata is $\Pi^0_2$-complete. This proves the theorem. \qed
\end{proof}

As a corollary of these constructions, we obtain a new proof of Sutner's result on finite nilpotency \cite{sutner1989}.
\begin{corollary}
The problem of determining whether a cellular automaton is nilpotent on finite configurations is $\Pi^0_2$-complete. \end{corollary}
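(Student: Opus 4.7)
The plan splits into a routine upper bound and a reduction from $\text{TOT}$ reusing the construction of $G_e$ from Section~\ref{sec:proof1}.

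For the upper bound, I would observe that finite nilpotency of a CA with quiescent state $q$ reads ``for every finite configuration $c$ there exists $n$ with $F^n(c) = q^{\mathbb{Z}^d}$''. Finite configurations are coded as naturals and the predicate $F^n(c) = q^{\mathbb{Z}^d}$ is decidable in $(c,n)$ since non-quiescent cells spread by at most the radius per step, so this places finite nilpotency in $\Pi^0_2$.

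For $\Pi^0_2$-hardness I would reduce $\text{TOT}$ to finite nilpotency via a minor variant $G'_e$ of $G_e$. The $p$ particle of $G_e$ exists only to witness sensitivity, and a lone $p$ in an otherwise blank finite configuration would propagate rightward forever, precluding nilpotency; I therefore delete $p$ from the state set together with its two rules, obtaining $G'_e$ with the all-blank cell as quiescent state. The claim is then that $G'_e$ is finitely nilpotent iff $e \in \text{TOT}$, and both directions should fall out of the two halves of Lemma~\ref{lemma:1Dhard}. For ``$\Leftarrow$'': any finite configuration has only finitely many computational blocks, the rightmost one has blank tape to its right, so its $x$-particle extends it freely, a fresh simulation halts by totality of $M_e$, and the block is erased; iterating wipes out every block. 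For ``$\Rightarrow$'': a finite configuration containing a single well-formed computational block whose input $n$ is one on which $M_e$ diverges never clears, exactly as in the non-sensitive direction of Lemma~\ref{lemma:1Dhard}.

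The main obstacle I foresee is the ``$\Leftarrow$'' direction for \emph{arbitrary} finite configurations, which may contain features the sensitivity proof never had to face, such as isolated machine heads, unpaired delimiters, stray $x$-particles, or halted cells adrift in blanks. Showing that each such artifact is eventually absorbed by an extending block or dissipates on its own may require either a tiny additional clean-up rule in $G'_e$ (for instance, deleting a head or $x$-particle whose neighborhood contains no delimiter) or a careful case analysis on the leftover structures. Once that bookkeeping is in place, the rest of the argument is just a reread of Lemma~\ref{lemma:1Dhard}, and combining it with the upper bound yields $\Pi^0_2$-completeness.
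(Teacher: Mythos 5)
Your proposal follows essentially the same route as the paper: the same $\forall\exists$ formulation of finite nilpotency for the upper bound (via a recursive map on codes of finite configurations), and the same reduction from TOT obtained by deleting the particle $p$ from the construction of $G_e$, with both directions read off from Lemma~\ref{lemma:1Dhard}; your concern about degenerate finite configurations is reasonable, but the paper relies on the cleanup mechanisms already built into $G_e$ (the $x$-symbols restarting computations and the propagating blanks) rather than adding new rules. The only step you omit is the paper's final lifting of the result to dimensions $d>1$ via the slicing construction, which is needed because the corollary is stated for cellular automata of arbitrary dimension.
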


\begin{proof}
Given a one-dimensional cellular automaton $G$ of radius $r$, finite nilpotency is a $\Pi^0_2$ property. This is because finite configurations can be encoded by a finite set along with the position of the top-rightmost non-zero cell. Consequently, the function $S$ that takes the code of a finite configuration $c$ and returns the code of $G(c)$ is recursive. Therefore finite nilpotency can be written as :
\begin{equation}
\forall e\in \mathbb N, \quad \exists n\in \mathbb N, \quad S^n(e) = \text{Code(0)}
\end{equation}
We obtain hardness by reducing TOT and removing particle $p$ in our construction of $G_e$. Hence, from our previous proof, all finite patterns $P$ will eventually become blank if and only if the given Turing machine halts on every input. We can then lift the result to higher dimensions using slices. For any $d$ and a $d$-dimensional cellular automaton $G$ which is finitely nilpotent. Consider the slice version of $G$. Any finite configuration on $A^{\mathbb{Z}^{d+1}}$ is finite on every slice and non-null on finitely many. Therefore, the slice version of $G$ is finitely nilpotent if and only if $G$ is. By induction, finite nilpotency is $\Pi^0_2$-complete. \qed
\end{proof}

\section{Higher Dimensional Cellular Automata}\label{sec:proof2}

In this section, we extend our analysis to cellular automata in two or more dimensions and prove Theorem~\ref{thm:dimd}.

\subsection{Upper Bound: $\Sigma^0_3$}

We begin by showing that for any $d > 1$, the sensitivity problem for $d$-dimensional cellular automata belongs to the class $\Sigma^0_3$ of the arithmetical hierarchy. First, we recall a crucial lemma from our previous discussion, Lemma~\ref{lemma:blocking-words}: A cellular automaton is not sensitive if and only if it has an arbitrarily large blocking word. We then state the following lemma.

\begin{lemma}\label{lemma:uBound}
For any $d > 1$, the problem of determining whether a $d$-dimensional cellular automaton is sensitive is $\Sigma^0_3$.
\end{lemma}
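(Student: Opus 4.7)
The plan is to combine Lemma~\ref{lemma:blocking-words} with Lemma~\ref{lemma:blocking-word} and just count quantifiers. By Lemma~\ref{lemma:blocking-words}, a cellular automaton $F$ is sensitive if and only if it is not the case that $F$ has arbitrarily large blocking words, which I would rewrite in the equivalent form
\[
\exists m \in \mathbb{N}, \ F \text{ has no } m\text{-blocking word}.
\]
This is the shape I want to analyse arithmetically.

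Next, I would invoke Lemma~\ref{lemma:blocking-word}: given $m$ and a description of $F$, the property ``$F$ has an $m$-blocking word'' is $\Sigma^0_2$. Inspecting the formula produced in that proof, $m$ enters only as a parameter determining the finite box $[0,m]^d$ on which patterns are compared inside a recursive predicate, so the $\Sigma^0_2$ bound is uniform in $m$ and in the index of $F$. Negating gives a $\Pi^0_2$ predicate ``$F$ has no $m$-blocking word'', still uniform in $m$. Prepending the outer $\exists m$ produces a formula of the shape $\exists m \, \forall n \, \exists k \, R$ with $R$ recursive, which is $\Sigma^0_3$ by definition.

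There is no substantive obstacle here; it is essentially a quantifier count once the earlier lemmas are in hand. The one point I would double-check when writing it up is precisely this uniformity claim, making sure that the $\Sigma^0_2$ predicate produced in Lemma~\ref{lemma:blocking-word} is arithmetical jointly in $m$ and in the encoding of $F$, so that applying an external $\exists m$ really does land in $\Sigma^0_3$ rather than costing an extra layer. Granting that, Lemma~\ref{lemma:uBound} follows immediately, and the substantive work of the section will then be the matching $\Sigma^0_3$-hardness lower bound via a reduction from COF.
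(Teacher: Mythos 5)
Your proposal is correct and is essentially the paper's own argument: the paper likewise combines Lemma~\ref{lemma:blocking-words} with the uniform $\Sigma^0_2$ bound of Lemma~\ref{lemma:blocking-word}, concluding that non-sensitivity ($\forall m$, there is an $m$-blocking word) is $\Pi^0_3$ and hence sensitivity is $\Sigma^0_3$. You merely negate one step earlier, which changes nothing of substance.
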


\begin{proof}
Recall that the property "there exists an $M$-blocking word" is $\Sigma^0_2$ by Lemma~\ref{lemma:blocking-word}. Therefore, by Lemma~\ref{lemma:blocking-words}, being non-sensitive is $\Pi^0_3$, as it requires the existence of $M$-blocking words for all sufficiently large $M$. Consequently, being sensitive is $\Sigma^0_3$. This establishes that the sensitivity problem is at most $\Sigma^0_3$. \qed
\end{proof}

\subsection{Lower Bound: $\Sigma^0_3$-Hardness}

In this section, we aim to prove the following result:

\begin{theorem}\label{thm:2D}
The problem of determining whether a two-dimensional cellular automaton is sensitive is $\Sigma^0_3$-complete.
\end{theorem}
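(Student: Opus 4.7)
The plan is to prove $\Sigma^0_3$-hardness by reducing from $\text{COF}$. Given an index $e$, I will construct a two-dimensional cellular automaton $H_e$ such that $H_e$ is sensitive if and only if $e \in \text{COF}$, i.e., $M_e$ halts on all but finitely many inputs. Combined with Lemma~\ref{lemma:uBound}, this yields $\Sigma^0_3$-completeness of the 2D sensitivity problem, and the extension to $d > 2$ follows by slicing.

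The starting point is the 1D construction $G_e$ from Section~\ref{sec:proof1}. Naively lifting $G_e$ to two dimensions by slicing fails, because even a single non-halting input of $M_e$ already produces arbitrarily large 2D blocking patterns in the sliced CA, simply by stacking identical 1D blocking words row by row; so the sliced CA would have sensitivity equivalent to $\text{TOT}$ rather than to $\text{COF}$. The new idea is therefore to tie the size of a 2D blocking pattern to the size of a non-halting input: I want a blocking region of side length $m$ to exist only when $M_e$ fails to halt on some input of size at least $m$. Under such a construction, arbitrarily large blocking patterns exist if and only if $M_e$ fails to halt on arbitrarily large inputs, which is equivalent to $e \notin \text{COF}$, and by Lemma~\ref{lemma:blocking-words} this is equivalent to $H_e$ being non-sensitive.

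To implement this, $H_e$ enriches sliced-$G_e$ with a vertical coupling layer. A natural design is to enforce, via vertical signals of finite range, that the computational blocks of $G_e$ sitting in adjacent rows carry strictly increasing input sizes; any violation emits a destruction signal that triggers the same $x$-particle cascade already used in $G_e$ to erase a block. A pattern of vertical extent $m$ that blocks its interior region then necessarily embeds $m$ distinct non-halting computations whose input sizes form a strictly increasing sequence, so the largest input involved is at least $m$. The forward direction is now immediate: if $e \in \text{COF}$, the set of non-halting inputs is bounded, hence so is the vertical extent of any valid stack, hence the size of blocking patterns is bounded, and by Lemma~\ref{lemma:blocking-words} $H_e$ is sensitive. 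Conversely, if $e \notin \text{COF}$, arbitrarily large non-halting inputs can be chosen with strictly increasing sizes and assembled into arbitrarily tall valid stacks, yielding blocking patterns of every size, as in the argument of Lemma~\ref{lemma:1Dhard} applied row by row.

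The main obstacle is calibrating the vertical coupling carefully: the rule must be local, must reject every degenerate 2D configuration that does not represent an honest stack of increasing non-halting computations, and must interact cleanly with the $x$-particle reset mechanism of $G_e$ so that arbitrary horizontal and vertical tape contents are eliminated. A secondary point is to verify that no alternative form of blocking pattern is possible beyond the intended stacks, which is handled by examining, case by case, how a prospective blocking pattern propagates signals and computations, mirroring the analysis of the 1D case.
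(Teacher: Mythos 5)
Your overall strategy is correct and shares its logical skeleton with the paper's proof (upper bound from Lemma~\ref{lemma:uBound}, lower bound by reducing \(\text{COF}\) to the existence of arbitrarily large blocking words via Lemma~\ref{lemma:blocking-words}), but the mechanism you propose for calibrating blocking-pattern size to input size is genuinely different. The paper encodes the input \emph{geometrically}: red cells self-organize into rectangular loops, a loop of perimeter \(2y\) is the tape of a simulation of \(M_e\) on input \(y\), probe particles split around a loop and rejoin behind it unless the loop persists, and extra workspace is obtained through ``tentacles''; thus a persistent barrier of diameter \(m\) forces a non-halting input of size roughly \(m\). You instead keep the one-dimensional simulation of Section~\ref{sec:proof1} intact in each row and add a vertical coupling forcing strictly increasing input sizes in adjacent rows, so that a barrier of height \(m\) forces a non-halting input of size at least \(m-1\). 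Your route reuses more of the 1D machinery and avoids designing a 2D self-stabilizing tape, which is attractive; the paper's route avoids the input-comparison coupling and gets the ``only finitely many shapes can block'' property directly from the loop geometry. Two calibration points you should make explicit if you carry this out: (i) the coupling must enforce \emph{strict increase} but not \emph{consecutiveness} of inputs, since \(e \notin \text{COF}\) only guarantees infinitely many non-halting inputs, not arbitrarily long runs of consecutive ones (your phrasing is consistent with this, but a naive local rule ``input of row \(k+1\) equals input of row \(k\) plus one'' would break the reduction); a workable local rule is to require vertical alignment of input blocks with containment plus at least one extra symbol at the right end. (ii) The forward direction needs more than ``valid stacks are bounded'': you must show every degenerate configuration that is not an honest stack is eventually destroyed or penetrated by a probe, which is exactly where the paper spends its effort (the stabilization of red zones and the particle-meeting lemma); your destruction cascade must provably clean up arbitrary vertical misalignments, not just violations of the increasing-input rule. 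Neither issue is fatal, but both must be discharged for the reduction to stand.
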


To prove $\Sigma^0_3$-hardness, we reduce from the Cofinite Set (COF) problem, which is known to be $\Sigma^0_3$-hard. For each $e$, let $M_e = (Q, A, q_i, q_h, \delta)$ be the associated Turing machine. We construct a 2D cellular automaton $G_e$ as follows:

The states of $G_e$ are $(A \cup \{\sqcup\}) \times (Q \cup \{\sqcup\})  \times (T \cup \{\sqcup\}) \times (P \cup \{\sqcup\}) \times \{\text{red1}, \text{red2}, \text{red3}, \text{red4}, \text{white}, \text{green}\}$.

Where $P= \{p, p_r, p_l, q_r, q_l, p_{ru}, p_{rd}, p_{lu}, p_{ld}, q_{ru}, q_{rd}, q_{lu}, q_{ld}\}$ is a set of particles, $T = Q\cup \{t_h\}$, with $Q$ a set of states for tentacles and $t_h$ a signal for halting. The colors red, white, and green delimit the computational zones. 

 $G_e$ operates using the von Neumann neighborhood.

We describe the cellular automaton's behavior as follows:

Each red cell can have at most two red neighbors. A red cell marked with $1$ can have either a $1$ neighbor to the west and a $1$ neighbor to the east, a $2$ neighbor to the west and a $1$ neighbor to the east, or a $4$ neighbor to the north and a $1$ neighbor to the west. Other marker rules are given Figure~\ref{fig:transition2D}. Consequently given a finite configuration, after finitely many steps, the only remaining red zones are the ones forming rectangular loops (Lemma~\ref{lem:redzone}),  these rectangles will form the tape of Turing machines. 

    
 $P$ is a set of particles, that move on white or green cells. There are two types of particles: $p$ particles, which send a signal to a Turing machine to process one step of computation, and $q$ particles, which place a Turing machine at the beginning of the tape in the initial state. The particles $p$ and $q$ are divided into right and left particles ($p_r$, $p_l$) that move to the right and left, respectively. When they meet at the beginning of a tape, they send a signal into that tape. If a particle meets an obstacle (i.e. a red zone) it divides into an up particle and a down particle ($p_{ru}$, $p_{lu}$, $p_{rd}$, $p_{ld}$) that follow the red loop and reform when meeting again. 
 
When a $p$-signal enters a red zone, it follows the loop guided by the 1, 2, 3, and 4 markers. When it encounters a Turing machine head, it allows the head to process one step of computation and is erased after meeting the first head or if there is no head on the zone. The Turing machine head is guided along the loop by the markers. If it encounters another head on the tape, it erases that head (i.e. the most advanced particle on the zone is erased). When it halts, it erases itself and the red zone.
 
When a machine runs out of space, it requests the creation of a tentacle that operates with $T$ symbols on green zones. Tentacles always grow to the east, or to the north if there is a red zone in the eastward position. These extension rules define local rules, and tentacles can be defined as green zones respecting these rules. The set $T$ contains copies of states of $M_e$ and includes a signal $t_h$ that is sent when the machine halts to destroy the connected red zone. By the first rules, when two green zones meet, they erase each other.If a particle reaches the end of the red zone (meaning it has not found any Turing machine head), it continues into the green zone.


We give below the main rules of the cellular automaton $G_e$ in Figure~\ref{fig:transition2D}.

\input{transition2D.tex}

\begin{lemma}\label{lem:redzone}
Given a finite configuration, after finitely many steps all red zones will form rectangular loops, and all green zones will become tentalces. This process is illustrated in Figure~\ref{fig:redzone}.
\end{lemma}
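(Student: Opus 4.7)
The plan is to exploit a key feature of the red-cell rules in Figure~\ref{fig:transition2D}: they are purely destructive. A red cell survives to the next step only when its neighborhood matches one of the listed survival patterns; otherwise the catch-all rule "in any other case the red zone erases itself" applies and the cell becomes non-red. Crucially, no transition rule ever creates a new red cell from a non-red one, so the set of red cells is monotonically non-increasing in time. Starting from a finite configuration we thus have a finite, shrinking family of red cells, which must stabilize after at most $|R_0|$ steps, where $R_0$ is the initial red set.

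Next I would characterize the stable configurations. The allowed survival neighborhoods force a surviving red cell marked $i \in \{1,2,3,4\}$ to sit on a specific side of a rectangle: marker $1$ along the bottom edge, $2$ along the right edge, $3$ along the top edge, $4$ along the left edge, with the corner patterns (for instance a $1$ sandwiched between a $4$ above and a $1$ to the east) pinning down the four corners. A straightforward case check of the six survival rules shows that a red cell survives forever if and only if it belongs to a complete rectangular loop labelled in this way. Consequently, once the red set has stabilized, what remains must be a disjoint union of such rectangular loops.

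For the green zones I would run the same monotonicity argument: the green-cell rules allow a green cell to persist only in a local pattern consistent with a tentacle extending eastward or turning north around a red obstacle, and two green zones meeting head-on annihilate. As with the red cells, this yields a non-increasing finite set of "illegal" green cells, which must vanish after finitely many steps, leaving only well-formed tentacles attached to rectangular red loops.

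The main obstacle is bookkeeping rather than conceptual depth: one has to enumerate the boundary patterns that can appear on a red region and verify that every deviation from a rectangular loop eventually triggers some erasure rule, rather than persisting as a stable non-rectangular shape. Monotonicity of the red and green sets guarantees termination, so the only work is the finite case analysis confirming that the stable limit configurations are exactly those described in the statement.
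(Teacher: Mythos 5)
Your proposal is correct and follows essentially the same route as the paper: both arguments rest on the observation that red and green cells are never created (so the finite red and green sets shrink monotonically and stabilize), combined with a case analysis of the survival rules showing that the only stable configurations are rectangular loops and tentacles. The paper makes the content of your deferred "bookkeeping" step slightly more explicit — it notes that the locally consistent red zones are spirals, and that a spiral which fails to close into a rectangle produces a conflict at its center which then unwinds the whole zone — but, like you, it omits the detailed case check.
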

\begin{proof}
Given a finite configuration, each green cell that does not follow the local rules will eventually become white. Therefore, after finitely many steps, all green zones that violate local rules will be erased. The only remaining connected green zones will be those that comply with the local rules, which take the form of tentacles.

Now consider the 5-state cellular automaton with states \(\{0, 1, 2, 3, 4\}\) governed by the transition rules \texttt{red1}, \texttt{red2}, \texttt{red3}, \texttt{red4}, and \texttt{white}, as shown in Figure~\ref{fig:transition2D}. Given a 0-finite configuration, the local rules constrain any connected non-zero zone to form a spiral (the proof is omitted). There are two possibilities: either the spiral closes into a rectangular loop, or it spirals indefinitely. In the latter case, the inward-spiraling zone creates a conflict at its center. Because no non-zero cell can be created the whole non-zero zone will be erased.  \qed
\end{proof}

\begin{figure}[h!]
	\begin{center}
	\resizebox{\linewidth}{!}{
\begin{tikzpicture}

    \node (tab1) at (0,0) {
        \begin{tabular}{| >{\centering\arraybackslash}m{0.75cm}
                  | >{\centering\arraybackslash}m{0.75cm}
                  | >{\centering\arraybackslash}m{0.75cm}
                  | >{\centering\arraybackslash}m{0.75cm}
                  | >{\centering\arraybackslash}m{0.75cm} |}
        \hline
         &  \cellcolor{red!30}4 &\cellcolor{red!30}3 &\cellcolor{red!30}3 &\cellcolor{red!30}4 \\
        \hline
	 & \cellcolor{red!30}4 & & &\cellcolor{red!30}4\\
        \hline
        \cellcolor{red!30}2 & \cellcolor{red!30}1  & & &\cellcolor{red!30}4\\
        \hline
        \cellcolor{red!30}2&  & & &\cellcolor{red!30}4\\
        \hline
        \cellcolor{red!30}2&  & & & \cellcolor{red!30}4\\
        \hline
        \cellcolor{red!30}2 &  \cellcolor{red!30}1 & \cellcolor{red!30}1& \cellcolor{red!30}1 & \cellcolor{red!30}1\\
        \hline
        \end{tabular}
    };

    \node (tab2) at (5, 0){
        \begin{tabular}{| >{\centering\arraybackslash}m{0.75cm}
                  | >{\centering\arraybackslash}m{0.75cm}
                  | >{\centering\arraybackslash}m{0.75cm}
                  | >{\centering\arraybackslash}m{0.75cm}
                  | >{\centering\arraybackslash}m{0.75cm} |}
        \hline
         &   & &\cellcolor{red!30}3 &\cellcolor{red!30}4 \\
        \hline
	 & \cellcolor{red!30}4 & & &\cellcolor{red!30}4\\
        \hline
         &  & & &\cellcolor{red!30}4\\
        \hline
        \cellcolor{red!30}2&  & & &\cellcolor{red!30}4\\
        \hline
        \cellcolor{red!30}2&  & & & \cellcolor{red!30}4\\
        \hline
        \cellcolor{red!30}2 &  \cellcolor{red!30}1 & \cellcolor{red!30}1& \cellcolor{red!30}1 & \cellcolor{red!30}1\\
        \hline
        \end{tabular}
    };
    \node (tab3) at (10, 0){
        \begin{tabular}{| >{\centering\arraybackslash}m{0.75cm}
                  | >{\centering\arraybackslash}m{0.75cm}
                  | >{\centering\arraybackslash}m{0.75cm}
                  | >{\centering\arraybackslash}m{0.75cm}
                  | >{\centering\arraybackslash}m{0.75cm} |}
        \hline
         &   & & &\cellcolor{red!30}4 \\
        \hline
	 &  & & &\cellcolor{red!30}4\\
        \hline
         &   & & &\cellcolor{red!30}4\\
        \hline
        &  & & &\cellcolor{red!30}4\\
        \hline
        \cellcolor{red!30}2&  & & & \cellcolor{red!30}4\\
        \hline
        \cellcolor{red!30}2 &  \cellcolor{red!30}1 & \cellcolor{red!30}1& \cellcolor{red!30}1 & \cellcolor{red!30}1\\
        \hline
        \end{tabular}
    };

    \draw[->, thick] (tab1) -- (tab2);
    \draw[->, thick] (tab2) -- (tab3);

    \node (tab5) at (0,-3) {
         \begin{tabular}{|p{0.75cm}|p{0.75cm}|p{0.75cm}|p{0.75cm}|p{0.75cm}|}
        \hline
         &  & \cellcolor{green!50!black} & \cellcolor{green!50!black} & \\
        \hline
	 & & & \cellcolor{green!50!black} &\\
        \hline
         &  & \cellcolor{green!50!black}& \cellcolor{green!50!black} & \cellcolor{green!50!black}\\
        \hline
        &  & \cellcolor{green!50!black}& &\\
        \hline
        \cellcolor{green!50!black} & \cellcolor{green!50!black}& \cellcolor{green!50!black}& &\\
        \hline
        &  & & &\\
        \hline
        \end{tabular}    
     };

    \node (tab6) at (5,-3) {
         \begin{tabular}{|p{0.75cm}|p{0.75cm}|p{0.75cm}|p{0.75cm}|p{0.75cm}|}
        \hline
         &  &  & & \\
        \hline
	 & & & \cellcolor{green!50!black} &\\
        \hline
         &  & \cellcolor{green!50!black}& & \cellcolor{green!50!black}\\
        \hline
        &  & \cellcolor{green!50!black}& &\\
        \hline
        \cellcolor{green!50!black} & \cellcolor{green!50!black}& \cellcolor{green!50!black}& &\\
        \hline
        &  & & &\\
        \hline
        \end{tabular}       
      };
    
     \node (tab7) at (10,-3) {
         \begin{tabular}{|p{0.75cm}|p{0.75cm}|p{0.75cm}|p{0.75cm}|p{0.75cm}|}
        \hline
         &  &  & & \\
        \hline
	 & & & &\\
        \hline
         &  & \cellcolor{green!50!black}& & \\
        \hline
        &  & \cellcolor{green!50!black}& &\\
        \hline
        \cellcolor{green!50!black} & \cellcolor{green!50!black}& \cellcolor{green!50!black}& &\\
        \hline
        &  & & &\\
        \hline
        \end{tabular}           
     };

    \draw[->, thick] (tab5) -- (tab6);
     \draw[->, thick] (tab6) -- (tab7);
    
    \node (tab8) at (5,-7.5) {
         \begin{tabular}{|p{0.75cm}|p{0.75cm}|p{0.75cm}|p{0.75cm}|p{0.75cm}|p{0.75cm}|p{0.75cm}|p{0.75cm}|p{0.75cm}|p{0.75cm}|p{0.75cm}|p{0.75cm}|p{0.75cm}|p{0.75cm}|p{0.75cm}|p{0.75cm}|p{0.75cm}|}
        \hline
         \cellcolor{red!30}& \cellcolor{red!30} &\cellcolor{red!30} & & & & & & & & & \cellcolor{red!30}&\cellcolor{red!30} &\cellcolor{red!30} &\cellcolor{red!30} &\cellcolor{red!30} \\
        \hline
	\cellcolor{red!30}&  & \cellcolor{red!30}& & & & & & & & &\cellcolor{red!30} & & & &\cellcolor{red!30} \\
        \hline
        \cellcolor{red!30}& & \cellcolor{red!30}& & & & & & & & &\cellcolor{red!30} & & & &\cellcolor{red!30} \\
        \hline
        \cellcolor{red!30}&\cellcolor{red!30}  &\cellcolor{red!30} & & & & & & & & &\cellcolor{red!30} & & & &\cellcolor{red!30} \\
        \hline
        &  & & & & \cellcolor{red!30}& \cellcolor{red!30} & \cellcolor{red!30}& & & &\cellcolor{red!30} & & & &\cellcolor{red!30} \\
        \hline
        &  & & & & \cellcolor{red!30}& &\cellcolor{red!30} & & & & \cellcolor{red!30}& & & &\cellcolor{red!30} \\
        \hline
        &  & & & &\cellcolor{red!30} &\cellcolor{red!30} & \cellcolor{red!30}& & & \cellcolor{green!50!black}&\cellcolor{red!30} & & & &\cellcolor{red!30} \\
        \hline
        \cellcolor{red!30}& \cellcolor{red!30} &\cellcolor{red!30} & \cellcolor{red!30}& & & & & & &\cellcolor{green!50!black} &\cellcolor{red!30} & & & &\cellcolor{red!30} \\
        \hline
        \cellcolor{red!30}&  & & \cellcolor{red!30}&\cellcolor{green!50!black} &\cellcolor{green!50!black} &\cellcolor{green!50!black} & \cellcolor{green!50!black}& \cellcolor{green!50!black}&\cellcolor{green!50!black} & \cellcolor{green!50!black} & \cellcolor{red!30}& & & & \cellcolor{red!30}\\
        \hline
        \cellcolor{red!30}&  & & \cellcolor{red!30}&\cellcolor{green!50!black} &\cellcolor{red!30} &\cellcolor{red!30} &\cellcolor{red!30} & \cellcolor{red!30}& & &\cellcolor{red!30} & & & &\cellcolor{red!30} \\
        \hline
        \cellcolor{red!30}&  & &\cellcolor{red!30} & \cellcolor{green!50!black}&\cellcolor{red!30} & & &\cellcolor{red!30} & & & \cellcolor{red!30}& & & & \cellcolor{red!30}\\
        \hline
        \cellcolor{red!30}&  & & \cellcolor{red!30}& \cellcolor{green!50!black}&\cellcolor{red!30} & & &\cellcolor{red!30} & &\cellcolor{green!50!black} & \cellcolor{red!30}& & & &\cellcolor{red!30} \\
        \hline
        \cellcolor{red!30}&\cellcolor{red!30}  &\cellcolor{red!30} &\cellcolor{red!30} & & \cellcolor{red!30}& & &\cellcolor{red!30} &\cellcolor{green!50!black} &\cellcolor{green!50!black} & \cellcolor{red!30}& & & &\cellcolor{red!30} \\
        \hline
        &  & & & & \cellcolor{red!30}&\cellcolor{red!30} & \cellcolor{red!30}& \cellcolor{red!30}& & &\cellcolor{red!30} &\cellcolor{red!30} &\cellcolor{red!30} &\cellcolor{red!30} &\cellcolor{red!30} \\
        \hline
        \end{tabular}    
     };

\node at (-2.5,0) {(a)};
\node at (-2.5,-3) {(b)};
\node at (-2.5,-7.5) {(c)};

\end{tikzpicture}}
\end{center}
\caption{Illustration of the stabilization process. (a) Defective red regions are progressively eliminated. (b) Green regions are removed in parallel. (c) A representative configuration after the system has stabilized.}
\label{fig:redzone}
\end{figure}

\begin{lemma}
	Given a finite configuration, we can send particles from outside the configuration to meet at any position outside the red zones within the configuration after the red zones have stabilized.
\end{lemma}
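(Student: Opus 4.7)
The strategy is explicit: given a target point $(x_0, y_0)$ outside every red rectangle of the stabilized configuration, I would launch a right-moving particle $p_r$ from $(-N_r, y_0)$ in the far west and a left-moving particle $p_l$ from $(N_l, y_0)$ in the far east, for integers $N_r, N_l$ chosen large enough that both launch sites lie outside the original finite support and the particles only enter the support after the red zones have stabilized (which occurs in finite time by Lemma~\ref{lem:redzone}).

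Once launched, each particle has a deterministic trajectory: on white or green cells it moves at unit speed in its direction, and upon hitting the west boundary of a red rectangle whose vertical extent properly contains $y_0$ it splits into $p_{ru}$ and $p_{rd}$, which traverse the northern and southern halves of the rectangle's perimeter and reform as $p_r$ on the east boundary at row $y_0$. Because $(x_0, y_0)$ is not inside any red rectangle, every red rectangle crossed by $p_r$ meeting row $y_0$ lies strictly west of $x_0$, so $p_r$ eventually passes through $(x_0, y_0)$ along row $y_0$; symmetrically for $p_l$ with rectangles strictly east of $x_0$. In particular, the trajectories of the two particles up to their meeting traverse disjoint collections of red rectangles, so they cannot interfere en route.

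It then remains to time the encounter. A straight traversal of one cell costs one step, while circumnavigating a given red rectangle at row $y_0$ contributes a delay depending only on the rectangle's dimensions and on $y_0$. Hence $T_r(N_r) = N_r + x_0 + D_r$ and $T_l(N_l) = N_l - x_0 + D_l$, for constants $D_r, D_l$ independent of $N_r$ and $N_l$. The condition $T_r(N_r) = T_l(N_l)$ reduces to the single linear equation $N_l - N_r = 2 x_0 + D_r - D_l$, which has infinitely many integer solutions with both $N_r$ and $N_l$ arbitrarily large; any such solution yields simultaneous arrival of the two particles at $(x_0, y_0)$. The main technical obstacle is the verification that each detour contributes a well-defined constant delay and that the particle always reforms on row $y_0$, which requires a careful reading of the rules for $p_{ru}, p_{rd}, p_{lu}, p_{ld}$ in Figure~\ref{fig:transition2D}: one must check that the two halves of a split travel along the perimeter at equal speed (or that the reformation rule waits until both are present) and reform only on the far side at the correct row. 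Once this is verified the result follows from the elementary Diophantine argument above.
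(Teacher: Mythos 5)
Your forward, explicit-trajectory argument is genuinely different from the paper's proof, and it founders on exactly the point you flag as ``the main technical obstacle'': the claim that a split particle reforms on the east boundary \emph{at row $y_0$} is false under the stated rules. The description of the dynamics says the up and down halves ``follow the red loop and reform when meeting again,'' and the reformation rule in Figure~\ref{fig:transition2D} fires when the two halves come within two rows of each other on the far side. Since the two halves travel the two arcs of the (expanded) perimeter at equal speed, they meet at the point antipodal to the splitting point; for a rectangle spanning rows $y_1$ through $y_2$, a particle entering at row $y_0$ therefore reforms at row $y_1+y_2-y_0$, the reflection of $y_0$ across the rectangle's horizontal midline, not at $y_0$. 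Once the particle leaves row $y_0$ after its first detour, the rest of your argument collapses: the subsequent rectangles it encounters are no longer those meeting row $y_0$, the trajectory need not pass through $(x_0,y_0)$ at all, and the ``constant delay $D_r$, $D_l$'' bookkeeping and the Diophantine synchronization $N_l-N_r=2x_0+D_r-D_l$ have nothing left to stand on. This cannot be patched by ``a careful reading of the rules''; it is a structural feature of antipodal reformation.

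The paper avoids this entirely by arguing \emph{backward}: it places the right-moving and left-moving particles at the desired meeting point in the stabilized configuration, observes that such a configuration (right-mover to the left of the left-mover, both outside the red zones) always has a preimage because the rectangular structure lets one invert the local rules, and that the order of the particles guarantees they have not met in the past; iterating preimages pushes the particles out of the finite support, and launching them from there far enough away (so the red zones have time to stabilize, by Lemma~\ref{lem:redzone}) reproduces the meeting. This sidesteps any need to track which row a particle occupies after a detour or to compute delays. If you want to salvage a forward proof, you would have to either redesign the reformation rule so that the particle provably returns to its entry row, or carry out a full analysis of the (row-changing) trajectory, which is considerably harder than the preimage argument.
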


\begin{proof}
		Consider a finite configuration in which the only red zones are rectangular loops, and suppose there is a single right-moving particle (possibly split) to the left of a left-moving particle (possibly split), both located outside the red zones. This configuration has a preimage. This follows from the fact that red zones are rectangular, allowing us to trace the evolution backward using the local rules. Moreover, since the right-moving particle is to the left of the left-moving one, they could not have met in the past. 
We place the particles at the desired meeting point and trace the consecutive preimages of the configuration backward. This allows us to determine where to initially place the particles outside the configuration so that they meet as intended. We only need to place them sufficiently far from the configuration to ensure that the red zones have time to evolve into rectangular loops. \qed
\end{proof}

\begin{lemma}\label{lemma:2Dhard}
For any $e \in \mathbb{N}$, the Turing machine $M_e$ halts on all but finitely many inputs if and only if the associated cellular automaton $G_e$ is sensitive.   
\end{lemma}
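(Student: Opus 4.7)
The plan is to prove both directions of the biconditional via Lemma~\ref{lemma:blocking-words}: $G_e$ is sensitive iff some $m$ admits no $m$-blocking word, and non-sensitive iff such words exist for arbitrarily large $m$. The reverse direction ($e \notin \text{COF} \Rightarrow G_e$ non-sensitive) is a direct construction of blocking words from non-halting inputs; the forward direction ($e \in \text{COF} \Rightarrow G_e$ sensitive) is a more substantial perturbation argument.

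For $e \notin \text{COF}$, pick an infinite sequence $n_1, n_2, \dots$ of inputs on which $M_e$ never halts. For each $n_i$, let $w_i$ be the pattern placing a red rectangular loop of side length $L_i$ (chosen large enough to carry the unary encoding of $n_i$) with $n_i$ written on its tape in the initial state. By Lemma~\ref{lem:redzone} the loop is self-sustaining under the local red-cell rules, and since the enclosed machine never reaches $q_h$ the wall-erasure rule never fires. By the particle-splitting rules, every particle arriving from outside splits into up/down components and reforms on the far side without entering. Hence the interior of the loop evolves as a closed subsystem depending only on $w_i$: any two configurations matching $w_i$ on the filled loop agree on the interior at every time, yielding an $m_i$-blocking word with $m_i$ proportional to $L_i$. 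Letting $i$ vary gives arbitrarily large blocking words and hence non-sensitivity.

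For $e \in \text{COF}$, let $N$ bound the finite set of non-halting inputs. The goal is to find a threshold $m$ (depending on $N$ and the neighborhood radius $r$) such that no $m$-blocking word exists. Given a candidate $w$ on $K \supseteq \sigma^p([0,m]^2)$, extend $w$ by blanks to a configuration $u$ and apply Lemma~\ref{lem:redzone}: after finitely many steps the surviving structures are rectangular loops whose simulated machines, even with the benefit of $q$-particle restarts and eastward/northward tentacle extensions when they run out of space, never reach $q_h$. Using cofiniteness together with these restart and tentacle mechanisms, one argues that for $m$ large the persistent loops cannot entirely cover $\sigma^p([0,m]^2)$. Picking a free point $q$ in $\sigma^p([0,m]^2)$, I then apply the preceding lemma on routing particles to build a second configuration $v$ equal to $w$ on $K$ but carrying an extra pair of particles placed far outside $K$ and timed to meet at $q$ at some step $t$. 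Then $G_e^t(u)$ and $G_e^t(v)$ disagree at $q$, refuting the blocking property of $w$.

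The main obstacle is this geometric claim in the forward direction: for $m$ sufficiently large in terms of $N$ and $r$, the persistent red loops inherited from $w$ cannot tile the target region $\sigma^p([0,m]^2)$. This reduces COF to a covering statement and requires a delicate analysis of the interaction between the local red-cell rules (which enforce a minimum separation between distinct loops), the $q$-particle restart mechanism (which drives the simulated head back to the initial position of a loop's tape), and the tentacle mechanism (which lets halting computations on sufficiently large tapes dissolve their surrounding loops). Once this geometric claim is established, producing the distinguishing perturbation is a direct application of the routing lemma stated just above.
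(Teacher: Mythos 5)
Your proof follows essentially the same route as the paper's: the $e \notin \text{COF}$ direction builds arbitrarily large blocking words from persistent red loops encoding non-halting inputs, and the $e \in \text{COF}$ direction combines Lemma~\ref{lem:redzone}, the tentacle/restart mechanisms, and the particle-routing lemma to refute the existence of large blocking words. The ``geometric claim'' you isolate as the main obstacle corresponds exactly to the step the paper also treats informally (it asserts that every sufficiently large accessible red zone is eventually destroyed, one at a time to avoid tentacle conflicts, so only small loops---which cannot protect a large target square---survive), so your decomposition matches the paper's argument in both substance and level of detail.
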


\begin{proof}
We prove that $e \in \text{COF}$ if and only if $G_e$ is sensitive. Let $x$ be any finite configuration, by Lemma~\ref{lem:redzone} in finite time all red zone will be rectangular red loops. These rectangular loops will be our Turing machines tapes and the input of a Turing machines will the length of its loop/tape divided by two. 

\paragraph{If $e \in \text{COF}$:} Let $x$ be any finite configuration. In finite time it will be constituted of rectangular red loops, white zones, and tentacles. For any accessible red zone (not inside another), we can send a particles $q$ to place a Turing machine on the zone. We can then send processing particles in those zones. Because inaccessible zones are surrounded by red zones, if these zones are sufficiently large, they must be surrounded by a large red area. Since $e \in \text{COF}$, there exists an $N$ such that for all $y > N$, $M_e$ halts on $y$. These machines might not have enough space to complete their computation, but they can request a tentacle. Since we control which zone is allowed to compute by sending a particle, we can prevent conflicts in the green zone by ensuring that only one red zone is permitted to compute at a time. Therefore, any sufficiently large block will have its border destroyed because the Turing on that tape will eventually halt, allowing us to send particles through it. by Lemma~\ref{lemma:blocking-words}, $G_e$ is sensitive.

\paragraph{If $e \notin \text{COF}$:} Then there exist infinitely many $y$ such that $M_e(y)$ does not halt. We can construct arbitrarily large blocking words as follows: For any $M > 0$, choose $y > M$ such that $M_e(y)$ does not halt. Construct a red rectangle with perimeter $2y$. This rectangle forms an $M$-blocking word, as:

1. The Turing machine simulating $M_e(y)$ will never halt, so the rectangle will never be destroyed.
2. No particles can penetrate the red border.
3. The interior of the rectangle is inaccessible to any external influence.

Since we can construct such blocking words for arbitrarily large $M$, by Lemma \ref{lemma:blocking-words}, $G_e$ is not sensitive. \qed
\end{proof}

\begin{proof}[Theorem~\ref{thm:2D}]
By Lemma~\ref{lemma:uBound}, we know that the sensitivity problem for two-dimensional cellular automata is $\Sigma^0_3$, and by Lemma~\ref{lemma:2Dhard}, we have established hardness. Hence, the theorem is proved. \qed
\end{proof}

\begin{proof}[Theorem~\ref{thm:dimd}]
By Lemma~\ref{lemma:uBound}, we know that the sensitivity problem for $d$-dimensional cellular automata is in $\Sigma^0_3$. We prove $\Sigma^0_3$-hardness by reducing from the two-dimensional case using a slicing argument.

Let $G_d$ be a $d$-dimensional cellular automaton, and let $G_{d+1}$ be its slice extension to dimension $d+1$. Suppose $w_d$ is an $m$-blocking word for $G_d$ over some finite set $K \subset \mathbb{Z}^d$. Then the word
\begin{equation}
w_{d+1}(x, x_{d+1}) = w_d(x), \quad \forall x_{d+1} \in [0, m-1], \quad \forall x \in K
\end{equation}
defines an $m$-blocking word for $G_{d+1}$. Therefore, if $G_d$ is not sensitive, neither is $G_{d+1}$.

Conversely, if $G_d$ is sensitive, then it does not admit arbitrarily large blocking words, so information can propagate in slices. Hence, $G_{d+1}$ is also sensitive. By induction on $d$, this proves the theorem. \qed
\end{proof}

This construction establishes a reduction from COF to the sensitivity problem, proving that the latter is $\Sigma^0_3$-hard. Combined with our earlier upper bound, this shows that the sensitivity problem for $d>1$ dimensional cellular automata is $\Sigma^0_3$-complete.

\section{Application}\label{sec:twin}

In this section, we explore an application of our results in number theory, specifically in relation to the twin prime conjecture. While we do not believe this to be a suitable approach for solving the twin prime conjecture, we argue that it demonstrates a concrete application of the arithmetical hierarchy that is not often explored. Furthermore, this construction illustrates how complexity classifications can provide unexpected connections between different areas of mathematics.

\begin{proposition}
The twin prime conjecture is a $\Pi^0_2$ statement in the arithmetical hierarchy.
\end{proposition}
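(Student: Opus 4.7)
The plan is to write the twin prime conjecture in prenex form and count quantifier alternations. Recall that a statement is $\Pi^0_2$ if it can be written as $\forall x\, \exists y\, R(x,y)$ where $R$ is a decidable (in fact primitive recursive) predicate.

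First I would state the conjecture in its standard form: there exist infinitely many primes $p$ such that $p+2$ is also prime. Then I would rewrite the quantifier ``infinitely many'' in the usual way, namely as
\begin{equation}
\forall N \in \mathbb{N},\ \exists n \in \mathbb{N},\ (n \geq N) \land \mathrm{Prime}(n) \land \mathrm{Prime}(n+2).
\end{equation}

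The key observation is that the matrix of this formula is decidable: primality of an integer is a primitive recursive predicate (for instance, test divisibility of $n$ by every integer between $2$ and $n-1$), and the comparison $n \geq N$ is also primitive recursive. Hence the entire predicate inside the quantifiers is $\Sigma^0_0 = \Pi^0_0$, so the whole statement has the form $\forall \exists (\text{decidable})$, which is exactly the definition of $\Pi^0_2$.

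There is no substantial obstacle here; the only thing to be careful about is justifying that primality is recursive, which is standard. This classification is optimal in the sense that no simpler form is known, since a $\Sigma^0_1$ (resp.\ $\Pi^0_1$) statement would correspond to the existence of a single witness (resp.\ a universally quantified decidable property), neither of which captures ``infinitely many''. Having this $\Pi^0_2$ classification is exactly what will allow us, in the next step, to encode the twin prime conjecture as the sensitivity of a suitably constructed one-dimensional cellular automaton via Theorem~\ref{thm:dim1}.
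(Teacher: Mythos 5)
Your proof is correct and follows essentially the same route as the paper: both express the conjecture as $\forall N\,\exists n\,(n \geq N \land \mathrm{Prime}(n) \land \mathrm{Prime}(n+2))$ and observe this is a $\forall\exists$ formula over a decidable matrix. You spell out the decidability of the primality predicate, which the paper leaves implicit, but there is no substantive difference.
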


\begin{proof}
The twin prime conjecture can be stated as:
\begin{equation}
\forall n, \exists p (p > n \land \text{Prime}(p) \land \text{Prime}(p+2))
\end{equation}
\qed
\end{proof}

We construct a Turing machine $M$ that, given an input $n$ in unary (represented by a string of 0s), halts if and only if it finds a pair of twin primes larger than or equal to $n$.


\begin{theorem}
There exists a cellular automaton $G_e$ that is sensitive to initial conditions if and only if the twin prime conjecture is true.
\end{theorem}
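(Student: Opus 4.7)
The plan is to compose the main reduction of Section~\ref{sec:proof1} with the specific Turing machine $M$ informally described just before the statement. First I would spell out $M$ precisely: on a unary input representing $n \in \mathbb{N}$, $M$ iterates over integers $p = n, n+1, n+2, \ldots$ in turn, testing for each $p$ whether both $p$ and $p+2$ are prime by a standard trial-division subroutine, and halts as soon as such a $p$ is found. By construction, $M$ halts on input $n$ if and only if there exists a twin prime pair both of whose members are $\ge n$, so $M$ halts on every input if and only if for every $n$ some twin prime pair exceeds $n$, which is exactly the twin prime conjecture.

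Next, I would use the proposition relating two-way and semi-infinite tape Turing machines to realize $M$ as a semi-infinite machine without changing its halting behavior, and let $e$ be an index of this machine in the fixed enumeration. Let $G_e$ be the one-dimensional cellular automaton produced from $e$ by the construction of Section~\ref{sec:proof1}. By Lemma~\ref{lemma:1Dhard}, $G_e$ is sensitive to initial conditions if and only if $e \in \text{TOT}$. Combining the two equivalences, $G_e$ is sensitive if and only if the twin prime conjecture is true, which is the desired statement.

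I do not expect any substantive obstacle here: both components are already established. The only points that require a line of care are that the input convention of $M$ matches the unary input convention used in the reduction (which it does by construction), and that testing primality and incrementing $p$ are indeed carried out by a Turing machine, which is standard. Since the reduction is uniform in $e$, one could even exhibit a concrete index by writing out the trial-division program explicitly, but this is not needed for the statement of the theorem.
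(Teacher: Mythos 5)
Your proposal is correct and follows essentially the same route as the paper: define the search machine $M$ so that $M$ halts on all inputs iff the twin prime conjecture holds, then apply the reduction of Lemma~\ref{lemma:1Dhard} to obtain $G_e$ sensitive iff $e \in \text{TOT}$. Your write-up is in fact slightly more careful than the paper's, since it explicitly addresses the input convention and the passage to a semi-infinite tape machine.
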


\begin{proof}
By our construction, the Turing machine $M$ halts on all inputs if and only if the twin prime conjecture is true. Using our reduction from the TOT problem to the sensitivity problem for cellular automata, we can construct a cellular automaton $G_e$ that is sensitive to initial conditions if and only if $M$ halts on all inputs. Therefore, $G_e$ is sensitive to initial conditions if and only if the twin prime conjecture is true. \qed
\end{proof}

\begin{remark}
The construction used in Theorem~\ref{thm:dim1} can be adapted to any \(\Pi^0_2\) statement. For example, one could replace the twin prime conjecture with the Riemann Hypothesis or Goldbach's conjecture — although these are actually \(\Pi^0_1\) statements and therefore technically simpler, the method still applies. More interestingly, it also applies to statements like $P = NP$, which is known to be \(\Pi^0_2\) \cite{madore2016}. However, in this article we focus on a single illustrative example.
\end{remark}


We have implemented this cellular automaton, which can be explored interactively at the following URL: \url{https://tom-favereau.github.io/misc/ca.html}. The source code for the automaton is also available on GitHub at: \\
\url{https://github.com/tom-favereau/twin_prime_automaton}.

\section{Conclusion and Future Work}

In this paper, we have established the precise complexity of determining sensitivity to initial conditions for cellular automata within the arithmetical hierarchy. Specifically, we have shown that this problem is $\Pi^0_2$-complete for one-dimensional cellular automata and $\Sigma^0_3$-complete for cellular automata of dimension two and higher. These results provide a complete characterization of the complexity of the sensitivity problem across all dimensions.

Additionally, we have provided a new proof of Sutner's result stating that the problem of determining finite nilpotency for one-dimensional cellular automata is $\Pi^0_2$-complete. We have also constructed a cellular automaton that is sensitive to initial conditions if and only if the twin prime conjecture is true.

As a corollary to our main results, we can conclude that the problem of determining non-sensitivity is $\Sigma^0_2$-complete for one-dimensional cellular automata and $\Pi^0_3$-complete for higher dimensions. This complementary result completes our analysis of K{\r{u}}rka's classes.

However, it is important to note that our reductions are not reversible. Consequently, the complexity of the sensitivity problem for reversible cellular automata remains an open question. This presents an interesting avenue for future research, as reversible cellular automata form an important subclass with unique properties and applications.

Furthermore, our work does not address the case of expansive cellular automata, which constitute the final class in K{\r{u}}rka's classification. For expansive cellular automata, we know they are at the level $\Sigma^0_1$ of the arithmetical hierarchy, yet the question of the decidability or undecidability of the problem is a well-known open problem in the field. This gap in our understanding presents another significant direction for future investigations.

Additionally, the complexity of determining the existence of equicontinuity points in cellular automata remains unknown, and it is possible that it might be analytical. This presents another important direction for future research, as having equicontinuity points can be seen as a two-dimensional K{\r{u}}rka class.

\begin{question} What is the complexity of determining whether a $d$-dimensional reversible cellular automaton is sensitive to initial conditions? \end{question}

\begin{question} What is the complexity of determining the existence of equicontinuity points for a $d$-dimensional cellular automaton? \end{question}

\begin{question} What is the complexity of determining whether a one-dimensional cellular automaton is expansive? \end{question}

In conclusion, while our results provide a comprehensive complexity analysis for the sensitivity problem for general cellular automata, they also highlight important open questions and future directions.

\bibliographystyle{splncs04}  
\bibliography{ref}  

\begin{thebibliography}{1}
\providecommand{\url}[1]{\texttt{#1}}
\providecommand{\urlprefix}{URL }
\providecommand{\doi}[1]{https://doi.org/#1}

\bibitem{Durand2003}
Durand, B., Formenti, E., Varouchas, G.: On undecidability of equicontinuity
  classification for cellular automata. Discrete Mathematics \& Theoretical
  Computer Science  \textbf{DMTCS Proceedings vol. AB, Discrete Models for
  Complex Systems (DMCS'03)}, 2 (Jan 2003). \doi{10.46298/dmtcs.2302},
  \url{https://dmtcs.episciences.org/2302}

\bibitem{kurka1997}
{K{\r{u}}rka}, P.: Languages, equicontinuity and attractors in cellular
  automata. Ergodic Theory and Dynamical Systems  \textbf{17}(2),  417--433
  (Apr 1997). \doi{10.1017/S014338579706985X}

\bibitem{kurka2008}
K{\r{u}}rka, P.: Topological dynamics of cellular automata. In: Marcus, B.,
  Rosenthal, J. (eds.) Codes, Systems, and Graphical Models. pp. 447--485.
  Springer New York, New York, NY (2001)

\bibitem{lukkarila2010}
Lukkarila, V.: On Undecidable Dynamical Properties of Reversible
  One-Dimensional Cellular Automata. {Ph.D.} thesis, University of Turku (2010)

\bibitem{madore2016}
Madore, D.: How to formulate the {P} vs {NP} problem as a formal statement
  inside the language of arithmetic?
  \url{https://math.stackexchange.com/q/1765741} (2016), math StackExchange,
  accessed on 2025-05-05

\bibitem{rogers1987}
Rogers, H.: Theory of recursive functions and effective computability. MIT
  Press, Cambridge, MA, USA (1987)

\bibitem{sablik2011topological}
Sablik, M., Theyssier, G.: Topological dynamics of cellular automata: Dimension
  matters. Theory Comput Syst  \textbf{48}(4),  693--714 (2011).
  \doi{10.1007/s00224-010-9255-x},
  \url{https://doi.org/10.1007/s00224-010-9255-x}

\bibitem{sutner1989}
Sutner, K.: A note on {Culik-Yu} classes. Complex Systems  \textbf{3}(1),
  107--115 (1989)

\bibitem{wolfram1983}
Wolfram, S.: Statistical mechanics of cellular automata. Rev. Mod. Phys.
  \textbf{55},  601--644 (Jul 1983). \doi{10.1103/RevModPhys.55.601},
  \url{https://link.aps.org/doi/10.1103/RevModPhys.55.601}

\end{thebibliography}

\end{document}